\documentclass[DIV=16]{scrartcl}
\usepackage[sb]{libertine} %
\usepackage[T1]{fontenc}
\usepackage{textcomp}
\usepackage[varqu,varl]{zi4}%
\usepackage[amsthm,upint]{libertinust1math} %
\usepackage[scr=boondoxo,bb=boondox]{mathalpha} %
\usepackage[pdftex,colorlinks=true,linkcolor={blue!50!black},citecolor={red!50!black},urlcolor=blue]{hyperref}
\usepackage{doi}
\usepackage[numbers]{natbib}
\usepackage{amsmath,amssymb,amsthm}
\usepackage{bm}
\usepackage{graphicx}
\usepackage{booktabs}
\usepackage{hyperref}
\usepackage{placeins}
\newtheorem{theorem}{Theorem}[section]
\newtheorem{problem}[theorem]{Problem}
\newtheorem{lemma}[theorem]{Lemma}
\newtheorem{corollary}[theorem]{Corollary}
\newtheorem{remark}[theorem]{Remark}

\usepackage{amsmath,amssymb,amsfonts,mathtools}
\usepackage{bm}
\usepackage{microtype}
\usepackage{xcolor}
\usepackage{enumitem}
\usepackage{graphicx}
\usepackage{booktabs}
\usepackage{tikz}
\usetikzlibrary{shapes,arrows}
\usetikzlibrary{patterns,decorations.pathreplacing}

\usepackage{algpseudocode}
\usepackage{subcaption}
\usepackage{siunitx}
\usepackage{cleveref}
\crefname{problem}{Problem}{Problems}
\Crefname{problem}{Problem}{Problems}
\crefname{remark}{Remark}{Remarks}
\Crefname{remark}{Remark}{Remarks}
\crefname{assumption}{Assumption}{Assumptions}
\Crefname{assumption}{Assumption}{Assumptions}

\makeatletter
\providecommand*{\toclevel@algorithm}{0}
\providecommand*{\toclevel@assumption}{0}
\providecommand*{\toclevel@theorem}{0}
\providecommand*{\toclevel@lemma}{0}
\providecommand*{\toclevel@proposition}{0}
\providecommand*{\toclevel@corollary}{0}
\providecommand*{\toclevel@remark}{0}
\providecommand*{\toclevel@problem}{0}
\providecommand*{\toclevel@proof}{0}
\makeatother

\setlist[itemize]{leftmargin=2.0em}
\setlist[enumerate]{leftmargin=2.0em}

\DeclareMathOperator{\divv}{div}

\DeclareMathOperator{\sym}{sym}

\DeclareMathOperator{\diag}{diag}

\DeclareMathOperator{\Id}{Id}

\newcommand{\Ip}[2]{\left\langle #1,#2\right\rangle}
\newcommand{\norm}[1]{\left\lVert #1\right\rVert}
\newcommand{\abs}[1]{\left\lvert #1\right\rvert}

\newcommand{\uu}{\boldsymbol v}
\renewcommand{\vv}{\boldsymbol z}

\newcommand{\ff}{\boldsymbol f}
\newcommand{\nn}{\boldsymbol n}

\newcommand{\GG}{\nabla}
\newcommand{\DD}{\boldsymbol D} %
\newcommand{\Sstress}{\boldsymbol S} %

\newcommand{\ljump}{\mathopen{[\mkern-2mu[}}
\newcommand{\rjump}{\mathclose{]\mkern-2mu]}}
\newcommand{\jump}[1]{\ljump #1 \rjump}

\newcommand{\It}{I}
\newcommand{\In}{I_n}
\newcommand{\Sn}{S_n}

\newcommand{\dt}{\partial_t}
\newcommand{\drv}{\mkern3mu\mathrm{d}}
\newcommand{\Dt}{\drv t}

\newcommand{\exact}{\mathrm{exN}}

\newcommand{\clip}{\mathrm{modN}}
\newcommand{\Pic}{\mathrm{Pic}}
\newcommand{\Th}{\mathcal M_h}
\newcommand{\Fh}{\mathcal F_h}
\newcommand{\Fhi}{\mathcal F_h^{\mathrm{int}}}

\newcommand{\GhD}{\Gamma_{D}}
\newcommand{\GhN}{\Gamma_N}

\newcommand{\Vv}{\boldsymbol{V}}
\newcommand{\Vvz}{\boldsymbol{V}_0}
\newcommand{\Pk}{\mathbb P_k}
\newcommand{\Vh}{\boldsymbol V_h}

\newcommand{\pos}{\oplus}
\newcommand{\negpart}{\ominus}
\newcommand{\ypos}[1]{\left(#1\right)^{\pos}}
\newcommand{\yneg}[1]{\left(#1\right)^{\negpart}}

\newcommand{\reg}{\delta}
\newcommand{\pn}{p}
\newcommand{\pnp}{\pn'}

\newcommand{\gOne}{\gamma_{1}}  %
\newcommand{\gTwo}{\gamma_{2}}  %
\newcommand{\hGD}{h_{\Gamma_D}} %
\newcommand{\gCIP}{\gamma_{\mathrm{CIP}}}  %

\newcommand{\eps}{\varepsilon}
\newcommand{\disc}{\mathrm{disc}}

\newcommand{\hF}{h_F}

\newcommand{\etaeff}[1]{\eta \left(#1\right)}

\newcommand{\DStress}[2]{D\Sstress \left(#1\right) \left(#2\right)}

\usepackage{tikz}
\usetikzlibrary{external,spy,shadows}
\tikzset{
  spy using overlaysshadow/.style={
    spy scope={#1,
      every spy on node/.style={
        rectangle,
        fill, fill opacity=0.25, text opacity=1
      },
      every spy in node/.style={
        rectangle, drop shadow,
        fill=white, draw, cap=round
      }
    }
  }
}
\usepackage{pgfplots}
\usepgfplotslibrary{groupplots}
\pgfplotsset{compat=1.18}
\newcommand{\mc}[2]{\multicolumn{1}{#1}{#2}}

\definecolor{myblue}{RGB}{0 83 139}
\definecolor{myred}{RGB}{114 16 69}
\definecolor{mygreen}{RGB}{0 94 0}
\title{}
\date{}
\author{Nils Margenberg\thanks{
    University of Magdeburg,
    Institute for Analysis and Numerics,
    Universit\"atsplatz 2,
    39104 Magdeburg,
    Germany,
    \texttt{\{nils.margenberg,carolin.mehlmann\}@ovgu.de}
  }
  \and Carolin Mehlmann\footnotemark[1]
}
\title{A Scalable Monolithic Modified Newton Multigrid Framework for Time-Dependent \texorpdfstring{$\pn$}{p}-Navier-Stokes Flow}

\begin{document}
\maketitle
\vspace*{-8ex}
\begin{abstract}
  Fully implicit tensor-product space-time discretizations of time-dependent
  $(\pn,\reg)$-Navier-Stokes models yield, on each time step, large nonlinear
  monolithic saddle-point systems. In the shear-thinning regime $1<\pn<2$,
  especially as $\pn\downarrow 1$ and $\reg\downarrow 0$, the decisive
  difficulty is the constitutive tangent: its ill-conditioning impairs Newton
  globalization and the preconditioning of the arising linear systems. We
  therefore develop a scalable monolithic modified Newton framework for
  tensor-product space-time finite elements in which the exact constitutive
  tangent in the Jacobian action is replaced by a better-conditioned surrogate.
  Picard and exact Newton serve as reference linearizations within the same
  algebraic framework. Scalability is achieved through matrix-free operator
  evaluation, a monolithic multigrid V-cycle preconditioner, order-preserving
  reduced Gauss-Radau time quadrature, and an inexact space-time Vanka smoother
  with single-time-point coefficient freezing in local patch matrices. We
  prove coercivity of the linearized viscous-Nitsche term in the uniformly
  elliptic regime $\nu_\infty>0$ and consistency of the reduced time quadrature.
  Numerical tests demonstrate robustness with respect to model parameters,
  nonlinear and linear iteration counts, and scalable parallel performance.
\end{abstract}

\section{Introduction}
\label{sec:intro}
Many geophysical, biomedical, and industrial flows are modeled by shear-thinning
or viscoplastic rheologies; see, e.g.,
\cite{galdi_mathematical_2008,drzisga_stencil_2020,RudiShihStadler2020AdvancedNewton,schussnig_robust_2021,he_linear_2025}.
We consider the time-dependent incompressible $\pn$-Navier-Stokes system with a
regularized $(\pn,\reg)$ stress law
\begin{equation}\label{eq:constitutive-law}
  \Sstress (\DD\uu)
  =\etaeff{\DD\uu}\DD\uu,
  \qquad
  \etaeff{\DD\uu}
  \coloneq \nu_{\infty}+\nu\bigl(\reg^2+\abs{\DD\uu}^2\bigr)^{\frac{\pn-2}{2}},\quad 1<\pn< 2,
\end{equation}
where $\DD\uu\coloneq \tfrac12\bigl(\GG\uu+(\GG\uu)^\top\bigr)$ denotes the
symmetric gradient and $\abs{\DD\uu}^2\coloneq \DD\uu:\DD\uu$ the Frobenius
norm.
We assume $\nu_{\infty}\ge 0$, $\nu>0$, and $\reg>0$.
The regularization $\reg$ removes the singularity at vanishing shear and makes
the stress law $C^1$; if $\nu_\infty>0$, the law is moreover uniformly
elliptic.

To obtain a scalable fully implicit space-time discretization, we employ a
tensor-product DG-in-time approach. This yields, on each time step, a large
nonlinear monolithic saddle-point system that couples all temporal and spatial
degrees of freedom on that step. For such formulations, solver performance is
governed not only by the discretization but also by the nonlinear iteration,
the constitutive linearization, and the efficiency of the
multigrid-preconditioned Krylov solve. This motivates the development of a
scalable monolithic modified Newton framework tailored to this setting.

In the shear-thinning regime, in particular in the limiting case $\pn\downarrow
1$, the exact constitutive tangent contains viscosity-derivative terms that
generate strong anisotropy and poor local conditioning. This can impair both
Newton globalization and Krylov convergence;
cf.~\cite{BarrettLiu1994Quasinorm,BelenkiBerselliDieningRuzicka2012pStokes}.
Picard iterations avoid these terms, but typically at the price of slower local
convergence; see, e.g., \cite{Toulopoulos2023DGPowerLaw}. The central issue is
therefore the construction of a scalable, robust, monolithic space-time solver
for strongly nonlinear shear-thinning $(\pn,\reg)$-Navier-Stokes flow.

To achieve mesh and parameter robustness in this regime, we employ a modified
Newton solver that replaces the exact constitutive tangent in the Jacobian
action by a better-conditioned surrogate while leaving the nonlinear residual
unchanged. Picard and exact Newton serve as reference linearizations within
the same discrete residual equation. This follows the general strategy of replacing ill-conditioned exact tangents
by better-conditioned linearizations while keeping the nonlinear residual
unchanged;
see~\cite{RudiShihStadler2020AdvancedNewton,ShihMehlmannLoschStadler2023PrimalDualNewton}.
Further improvements of the globalization strategy through the line search are
discussed in~\cite{schmidtNiko2025,wickTtoul2017}.

To achieve scalability of the monolithic solver, the nonlinear iteration is
combined with a monolithic space-time multigrid preconditioner, matrix-free
time-step operator evaluation, weak imposition of Dirichlet data by Nitsche
terms, and convection-aligned CIP stabilization. The multigrid hierarchy
exploits the tensor-product structure in time and space, while the smoother
uses local Vanka patch solves. Because fully time-dependent patch assembly is
expensive, we fix the state-dependent coefficients at one representative time
point per time step when assembling the local patch matrices. This is the only
approximation inside the preconditioner; cf.~\cite{MargenbergBause2026MonolithicSTMG}.

For the fully discretized system in the uniformly elliptic regime $\nu_\infty>0$,
we prove coercivity of the linearized viscous-Nitsche term, derive a discrete
energy estimate for the convection-free problem, and justify the reduced
Gauss-Radau time quadrature used in the implementation. The numerically most
challenging regime is, however, the nearly degenerate limit
$\nu_\infty\downarrow 0$ and $\pn\downarrow 1$. We therefore also include
$\nu_\infty=0$ in the numerical experiments to assess solver robustness.

\paragraph{Contributions and scope}
We present a scalable mesh-robust multigrid-preconditioned Newton framework for
tensor-product space-time finite element discretizations of the time-dependent
$(\pn,\reg)$-Navier-Stokes equations.
\begin{enumerate}
\item \textbf{Conditioning of the Jacobian.} We formulate Picard, exact
  Newton, and modified Newton methods for the linearization of the monolithic
  space-time system. In the modified Newton method, the exact constitutive
  tangent in the Jacobian action is replaced by a better-conditioned
  surrogate, leading to an improved conditioning of the Jacobian
  (\Cref{sec:newton,sec:newton-krylov}).

\item \textbf{Scalable monolithic algebraic realization.} We combine these
  nonlinear iterations with matrix-free time-marching operators and a monolithic
  space-time multigrid preconditioner with a Vanka-type smoother. Therein, a
  surrogate patch assembly based on single-time-point evaluations is used
  (\Cref{sec:algebraic,sec:newton-krylov}).

\item \textbf{Computational assessment in the shear-thinning regime.} The
  numerical study demonstrates that mesh robustness can only be achieved with the
  modified Newton solver in the shear-thinning regime for $\pn\downarrow 1$ and
  $\reg \downarrow 0$ (\Cref{sec:numerics}).
\end{enumerate}

\subsection{Related work}
Foundational PDE theory and finite element analysis in the natural-distance
framework for incompressible flows with $\pn$-structure is
provided in \cite{MalekNecasRokytaRuzicka1996,DieningRuzickaWolf2010Weak}
and \cite{BarrettLiu1994Quasinorm,BelenkiBerselliDieningRuzicka2012pStokes}, respectively.
Implicit time discretizations and fully discrete convergence results for
shear-dependent viscosities are discussed for example in
\cite{ProhlRuzicka2001FullyImplicit,BerselliKaltenbach2025Convergence}. Recent discretization developments for unsteady $\pn$-Navier-Stokes problems
include local discontinuous Galerkin methods
\cite{KaltenbachRuzicka2023LDGpNS,KaltenbachRuzicka2023PartII,KaltenbachRuzicka2024PartIII},
virtual element schemes for pseudoplastic and non-Newtonian Stokes flows on
polygonal meshes \cite{AntoniettiEtAl2024VEMNonNewtonian}, and stabilized
time-DG space-time discretizations for power-law-type models
\cite{Toulopoulos2023DGPowerLaw}.

Large monolithic linear systems arising from generalized Stokes and
Oseen-type problems, monolithic multigrid and block-preconditioned Krylov
methods are well developed~\cite{drzisga_stencil_2020,schussnig_robust_2021,kohlTextbookEfficiencyMassively2022,abu-labdeh_monolithic_2023,jodlbauerMatrixfreeMonolithicMultigrid2024,prietosaavedraMatrixFreeStabilizedSolver2024,rafiei2025improvedmonolithicmultigridmethods,voroninMonolithicMultigridPreconditioners2024}. Recent developments include matrix-free high-order realizations and
$hp$-robust variants with Vanka-type patch smoothing; see, e.g.,
\cite{MargenbergMunchBause2025hpMGStokes,MargenbergBause2026MonolithicSTMG}.
On the nonlinear side, modified Newton-Krylov strategies based on replacing the
exact tangent by better-conditioned surrogates have proved effective for
viscous-plastic Stokes and viscous-plastic sea-ice models
\cite{RudiShihStadler2020AdvancedNewton,ShihMehlmannLoschStadler2023PrimalDualNewton}.

Our goal is to develop a modified Newton-Krylov method for fully implicit
monolithic space-time discretizations of shear-thinning
$(\pn,\reg)$-Navier-Stokes flow. To the best of our knowledge, the
combination of shear-thinning $(\pn,\reg)$-Navier-Stokes flow, a fully
implicit monolithic space-time discretization, and modified Newton-Krylov
iterations with monolithic space-time multigrid preconditioning has not been
studied before. Existing works on $\pn$-Navier-Stokes discretization mainly
address stability and convergence of the discretization~\cite{KaltenbachRuzicka2023LDGpNS,KaltenbachRuzicka2023PartII,KaltenbachRuzicka2024PartIII,Toulopoulos2023DGPowerLaw},
whereas our focus is on scalability, mesh-robust solver performance, and the
treatment of the constitutive tangent in a modified Newton method.

\paragraph{Organization}
\Cref{sec:model} states the governing equations and weak formulation.
\Cref{sec:discretization} introduces the fully implicit tensor-product
space-time DG formulation with Nitsche and CIP terms.
\Cref{sec:newton} outlines the modified Newton solver.
\Cref{sec:algebraic} and \Cref{sec:newton-krylov} describe the time-step-wise
algebraic realization, matrix-free operators, and the monolithic multigrid
preconditioned Krylov solve.
Numerical experiments are reported in \Cref{sec:numerics}, and
\Cref{sec:conclusions} concludes the paper.

\section{Time-dependent \texorpdfstring{$(\pn,\reg)$}{(p,delta)}-Navier-Stokes system}\label{sec:model}
We state the strong and weak formulations and fix the notation used in the
discretization and linearization sections.

\subsection{Problem statement and weak formulation}\label{sec:model:problem}
Let $\Omega\subset\mathbb{R}^d$, $d\in\{2,3\}$, be a bounded Lipschitz domain,
let $\It=(0,T)$, and assume a boundary decomposition
$\partial\Omega=\GhD\cup\GhN$ with $\GhD\cap\GhN=\emptyset$ and outer unit
normal $\nn$.
We write $\Gamma\coloneq\partial\Omega$.
Given are a body force $\ff$, Dirichlet data $\boldsymbol g_D$, and an initial
velocity $\uu_0$.

\paragraph{Spaces and pairings}
Fix $\pn\in(1,\infty)$ and set $\pnp=\pn/(\pn-1)$.
We use the same symbol $\Ip{\cdot}{\cdot}$ for duality pairings and $L^2$
inner products; when both arguments lie in $L^2$, the two coincide.
Define
\[
  \Vvz \coloneq \{ \uu\in \boldsymbol H^1(\Omega) : \uu=\boldsymbol 0 \text{ on }\GhD\},
\]
and $Q\coloneq L^2(\Omega)$. If $\GhN=\emptyset$, the pressure is determined
only up to an additive constant, and one may equivalently work in
$L^2_0(\Omega)$. For time-dependent Dirichlet data we interpret, for a.e.\
$t\in\It$,
\[
  \Vv(t)\coloneq \boldsymbol g_D(t)+\Vvz,
\]
so the weak formulation is posed in the affine space $\Vv(t)\times Q$ for
a.e.\ $t\in\It$.

\paragraph{Constitutive law}
We use the regularized $(\pn,\reg)$ constitutive law \eqref{eq:constitutive-law}
and restrict the discussion to the shear-thinning regime $1<\pn < 2$.
For brevity we often write $\bm A=\DD\uu$.

\paragraph{Strong form}
Find $(\uu,\pi)$ such that
\begin{equation}
\label{eq:pns-strong}
\begin{aligned}
  \dt \uu
  +\divv(\uu\otimes\uu)
  -\divv \Sstress(\DD\uu)
  +\GG \pi &= \ff &&\text{in }\Omega\times \It,\\
  \divv \uu &= 0 &&\text{in }\Omega\times \It,\\
  \uu &= \boldsymbol g_D &&\text{on }\GhD\times \It,\\
  \bigl(\Sstress(\DD\uu)-\pi\Id\bigr)\nn &= \boldsymbol 0 &&\text{on }\GhN\times \It,\\
  \uu(\cdot,0)&=\uu_0 &&\text{in }\Omega.
\end{aligned}
\end{equation}

\paragraph{Weak form}
Testing \eqref{eq:pns-strong} with $(\vv,q)\in \Vvz\times Q$, integrating the
viscous and pressure terms by parts, and using $\vv|_{\GhD}=0$, we obtain:
find $(\uu(t),\pi(t))\in \Vv(t)\times Q$ such that, for a.e.\ $t\in \It$,
\begin{equation}
\label{eq:pns-weak}
\begin{aligned}
  \Ip{\dt\uu}{\vv}
  &+\Ip{\Sstress(\DD\uu)}{\DD\vv}
  -\Ip{\uu\otimes\uu}{\GG\vv}
  +\Ip{(\uu\cdot\nn)\uu}{\vv}_{\GhN}
\\
  &-\Ip{\divv\vv}{\pi}
  -\Ip{\divv\uu}{q}
  =\Ip{\ff}{\vv}
  \qquad \forall(\vv,q)\in \Vvz\times Q,
\end{aligned}
\end{equation}
together with $\uu(\cdot,0)=\uu_0$ in $\boldsymbol L^2(\Omega)$.
Thus the convective boundary term acts only on
$\GhN$.
For later reference we write \eqref{eq:pns-weak} as
\[
  \Ip{\dt\uu}{\vv}
  +a(\uu)(\vv)
  +c(\uu)(\vv)
  +b(\vv,\pi)+b(\uu,q)
  =\Ip{\ff}{\vv},
\]
with
\[
  a(\uu)(\vv)\coloneq \Ip{\Sstress(\DD\uu)}{\DD\vv},
  \qquad
  b(\vv,q)\coloneq -\Ip{\divv\vv}{q},
\]
and
\[
  c(\uu)(\vv)\coloneq -\Ip{\uu\otimes\uu}{\GG\vv}
  +\Ip{(\uu\cdot\nn)\uu}{\vv}_{\GhN}.
\]

\paragraph{Regularity and well-posedness}
Existence of weak solutions for generalized Newtonian fluids with
$(\pn,\reg)$-structure, including the case $\nu_\infty=0$, is covered by
\cite{DieningRuzickaWolf2010Weak}.
For variable exponent models $p(\bm x,t)$ and fully discrete implicit schemes,
see \cite{BerselliKaltenbach2025Convergence}.
In the present paper we exploit that, for $\nu_\infty>0$ and $\reg>0$, the
stress law \eqref{eq:constitutive-law} is strictly monotone, uniformly elliptic,
and $C^1$ with respect to the symmetric gradient; see
\cite{BarrettLiu1993Carreau,BarrettLiu1994Quasinorm}.
For $\nu_\infty=0$ and $\reg>0$, strict monotonicity and $C^1$-regularity
remain valid, but uniform ellipticity is lost for large shear.
Accordingly, the analysis below is carried out only for $\nu_\infty>0$, whereas
the case $\nu_\infty=0$ is treated numerically. In particular, the
$\boldsymbol H^1$-based weak formulation adopted above matches the uniformly
elliptic regime addressed analytically in the remainder of the paper. In the
implementation we nevertheless use the same $\boldsymbol H^1$-conforming
discrete velocity space also for $\nu_\infty=0$; in that regime the natural
coercivity scale is $\boldsymbol W^{1,\pn}$.

\section{Space-time discretization}
\label{sec:discretization}

We use a DG discretization in time and an inf-sup stable conforming finite
element discretization in space.
The spaces are introduced in \Cref{sec:st-spaces}, the spatial semilinear forms
in \Cref{sec:st-forms}, and the fully discrete problem in
\Cref{sec:st-fully-discrete}.

\subsection{Spaces and meshes}
\label{sec:st-spaces}

Let $0=t_0<t_1<\dots<t_N=T$ and denote the temporal mesh by
$\mathcal M_\tau\coloneq\{\In\}_{n=1}^N$ with
\begin{equation}
  \label{eq:In-Sn}
  \In=(t_{n-1},t_n],\quad\text{and}\quad\Sn=\Omega\times \In.
\end{equation}
We refer to \(\In\) as a time step and to \(\Sn\) as the corresponding
space-time slab.
Set $\tau_n\coloneq t_n-t_{n-1}$ and $\tau\coloneq\max_n\tau_n$.
For $k\in\mathbb N_0$ define the broken polynomial space
\begin{equation}
\label{eq:DefYtau}
  Y_\tau^k(\It)\coloneq
  \{w_\tau:\It\to\mathbb R \mid (w_\tau)_{|\In}\in\Pk(\In;\mathbb R)\ \forall \In\in\mathcal M_\tau\}.
\end{equation}

Let $\Th$ be a shape-regular mesh of $\Omega$ with element diameter $h_T$ and
$h\coloneq\max_{T\in\Th}h_T$.
We denote by $\Fhi$ the set of interior faces, by $\Fh^\Gamma$ the set of
boundary faces, and by $\hF$ a local face diameter.
For $F\in\Fhi$ we fix an orientation by a unit normal $\nn_F$ and use the
standard jump notation $\jump{\cdot}$ across $F$.

Let $(\Vh,Q_h)$ be an inf-sup stable finite element pair on $\Omega$, with
$\Vh\subset \boldsymbol H^1(\Omega)$ (for example Taylor-Hood, or
Scott-Vogelius on suitable meshes).
We assume that the discrete inf-sup constant and the trace and Korn constants
remain uniform under refinement, up to the standard dependence on the polynomial
degree.
Set $\boldsymbol X_h\coloneq \Vh\times Q_h$ and define the discretely
divergence-free subspace
\[
  \Vh^{\operatorname{div}}\coloneq
  \{\uu_h\in\Vh\mid \Ip{\divv\uu_h}{q_h}=0\ \forall q_h\in Q_h\}.
\]

The space-time trial and test space is
\[
  \boldsymbol X_{\tau h}^{k}
  \coloneq \big(Y_\tau^k(\It)\otimes \Vh\big)\times \big(Y_\tau^k(\It)\otimes Q_h\big).
\]
For $w_\tau\in Y_\tau^k(\It)$ we denote by $w_\tau^-(t_n)$ and $w_\tau^+(t_n)$
the left and right traces at $t_n$, and by
$\jump{w_\tau}_n\coloneq w_\tau^+(t_n)-w_\tau^-(t_n)$ the temporal jump.
If an initial value $w_0$ is prescribed, we set $w_\tau^-(t_0)\coloneq w_0$ and
$\jump{w_\tau}_0=w_\tau^+(t_0)-w_0$.
In particular, we prescribe $\uu_{\tau h}^-(t_0)\coloneq \uu_{0,h}$.

\paragraph{Gauss-Radau temporal nodes}
On each interval $\In$ we fix the right-sided $(k{+}1)$-point Gauss-Radau nodes
$\{t_n^\mu\}_{\mu=1}^{k+1}$ and use the associated Lagrange basis
$\{\ell_\mu\}_{\mu=1}^{k+1}\subset\Pk(\In)$ satisfying
$\ell_\mu(t_n^\nu)=\delta_{\mu\nu}$.
Thus the temporal degrees of freedom on $\In$ are nodal values at
$\{t_n^\mu\}$.
Because $t_n$ is one of the Gauss-Radau nodes, $w_\tau(t_n^-)$ is a nodal
value on $\In$, whereas $w_\tau(t_{n-1}^+)$ is obtained by evaluating the
Lagrange expansion at the left endpoint.

\subsection{Semilinear spatial form and stabilization}
\label{sec:st-forms}

We first define the semilinear spatial form and then specify its stabilization
and boundary terms.

\paragraph{Semilinear forms}
For $\boldsymbol u_h=(\uu_h,\pi_h)\in\boldsymbol X_h$ and
$\boldsymbol w_h=(\vv_h,q_h)\in\boldsymbol X_h$, set
\begin{equation}
\label{eq:pns-form}
  A(\boldsymbol u_h)(\boldsymbol w_h)
  \coloneq
  a(\uu_h)(\vv_h)
  +c(\uu_h)(\vv_h)
  +b(\vv_h,\pi_h)+b(\uu_h,q_h),
\end{equation}
and define
\begin{equation}
\label{eq:def-a-gam}
\begin{split}
A_\gamma(\boldsymbol u_h)(\boldsymbol w_h)
&\coloneq
A(\boldsymbol u_h)(\boldsymbol w_h)
-\Ip{(\Sstress(\DD\uu_h)-\pi_h\Id)\nn}{\vv_h}_{\GhD}
\\
&\qquad
+ B_\gamma(\uu_h,\boldsymbol w_h)
+ s_{\mathrm{CIP}}(\uu_h;\uu_h,\vv_h).
\end{split}
\end{equation}
We use the divergence form of the convective term:
for $\uu_h,\vv_h\in\Vh$,
\begin{equation}
\label{eq:Defc}
  c(\uu_h)(\vv_h)
  \coloneq
  -\Ip{\uu_h\otimes \uu_h}{\GG \vv_h}
  +\Ip{(\uu_h\cdot\nn)\uu_h}{\vv_h}_{\Gamma}.
\end{equation}
Since Dirichlet data are imposed weakly, $\Vh$ does not vanish on $\GhD$.
Accordingly, the boundary integral in \eqref{eq:Defc} is taken over the full
boundary, and the inflow correction on $\GhD$ is handled by $B^c$ in
\eqref{eq:def-b-gam}.

\paragraph{CIP stabilization}
On the interior skeleton $\Fhi$ we add a convection-aligned CIP term.
For $\hat{\uu}\in\Vh$ and $\uu_h,\vv_h\in\Vh$ we set
\begin{equation}
\label{eq:scip-def}
  s_{\mathrm{CIP}}(\hat{\uu};\uu_h,\vv_h)
  \coloneq \sum_{F\in\Fhi} \gCIP\,\hF^2\,
  \Ip{\abs{\hat{\uu}\cdot\nn_F}\jump{\GG \uu_h\cdot\nn_F}}
     {\jump{\GG \vv_h\cdot\nn_F}}_F.
\end{equation}
This term acts only on convective transport and does not modify the
velocity-pressure coupling.
Since $(\Vh,Q_h)$ is inf-sup stable, no pressure stabilization is added.

\paragraph{Boundary terms: Nitsche with fixed viscous coefficients}
Dirichlet data are imposed weakly by Nitsche terms;
see \cite{Nitsche1971Dirichlet,JuntunenStenberg2009Nitsche}.
For non-Newtonian models, related weak-imposition ideas are discussed in
\cite{BaltussenEtAl2011WeakDirichletNonNewtonian}.

For $y\in\mathbb R$ write
$\ypos{y}\coloneq \tfrac12(\abs{y}+y)$ and
$\yneg{y}\coloneq \tfrac12(\abs{y}-y)$.
Let $\gOne>0$ and $\gTwo>0$ be penalty parameters and let $\hGD$ denote a local
boundary mesh size on $\GhD$.
Further, let $\etaeff{\cdot}$ be the effective viscosity from
\eqref{eq:constitutive-law}.

Let $\boldsymbol g_D$ denote the prescribed Dirichlet trace on $\GhD$ and fix a
lifting $\hat{\boldsymbol g}_D$ to $\Omega$.
Whenever $B_\gamma(\boldsymbol g_D,\cdot)$ appears below, it is understood
through this fixed lifting.
In the viscous Nitsche terms we evaluate both the effective viscosity and the
stress derivative at $\DD\hat{\boldsymbol g}_D$.
This keeps the boundary contribution bilinear, while the volume term
$\uu\mapsto \Sstress(\DD\uu)$ remains fully nonlinear.
Accordingly, define
$B_\gamma(\cdot,\cdot):\boldsymbol H^1(\Omega)\times \boldsymbol X_h\to\mathbb R$
by
\begin{subequations}
\label{eq:def-b-gam}
\begin{alignat}{2}
\label{eq:def-b-gam-1}
B_\gamma(\uu,\boldsymbol w_h)
&\coloneq B^{c}(\uu,\boldsymbol w_h)+B^{s}(\uu,\boldsymbol w_h)+B_\gamma^{r}(\uu,\boldsymbol w_h),
\\
\label{eq:def-b-gam-2}
B^{c}(\uu,\boldsymbol w_h)
&\coloneq -\Ip{\yneg{\uu\cdot\nn}\,\uu}{\vv_h}_{\GhD},
\\
\label{eq:def-b-gam-2.5}
B^{s}(\uu,\boldsymbol w_h)
&\coloneq -\Ip{\uu}{\Big(\DStress{\DD\hat{\boldsymbol g}_D}{\DD\vv_h}+q_h\Id\Big)\nn}_{\GhD},
\\
\label{eq:def-b-gam-3}
B_\gamma^{r}(\uu,\boldsymbol w_h)
&\coloneq \gOne\,\hGD^{-1}\Ip{\etaeff{\DD\hat{\boldsymbol g}_D}\,\uu}{\vv_h}_{\GhD}
+\gTwo\,\hGD^{-1}\Ip{\uu\cdot\nn}{\vv_h\cdot\nn}_{\GhD}.
\end{alignat}
\end{subequations}

\paragraph{Consistency and adjoint consistency}
If $\uu\in\boldsymbol H^1(\Omega)$ satisfies
$\uu|_{\GhD}=\boldsymbol g_D$ in the trace sense, then
\(B_\gamma(\uu,\boldsymbol w_h)=B_\gamma(\boldsymbol g_D,\boldsymbol w_h)\) for
all $\boldsymbol w_h\in\boldsymbol X_h$.
Indeed, all terms in \eqref{eq:def-b-gam} depend on $\uu$ only through its
trace on $\GhD$.
Fixing the viscous coefficients at $\DD\hat{\boldsymbol g}_D$ generally
destroys adjoint consistency.
Accordingly, we do not pursue duality-based $L^2$-error estimates here.
For stability, $\gOne$ (and $\gTwo$ for the normal component) is chosen
sufficiently large so that the standard Nitsche trace argument applies;
cf.~\cite{JuntunenStenberg2009Nitsche}.

\subsection{Fully discrete space-time variational problem and energy stability}
\label{sec:st-fully-discrete}

\begin{problem}[Discrete space-time variational problem]
\label{Prob:DVP-pns}
Assume
\[
  \ff\in L^{\pnp}(\It;\boldsymbol W^{-1,\pnp}(\Omega)),
  \qquad
  \boldsymbol g_D\in L^2(\It;\boldsymbol H^{1/2}(\GhD)),
\]
Let $\uu_{0,h}\in \Vh$ be an approximation of the initial velocity $\uu_0$ and
prescribe $\uu_{\tau h}^-(t_0)\coloneq \uu_{0,h}$.
Find $\boldsymbol u_{\tau h}=(\uu_{\tau h},\pi_{\tau h})\in \boldsymbol X_{\tau h}^{k}$
such that for all $\boldsymbol w_{\tau h}=(\vv_{\tau h},q_{\tau h})\in \boldsymbol X_{\tau h}^{k}$,
\begin{equation}
\label{eq:disc}
\begin{aligned}
\sum_{n=1}^{N}\int_{t_{n-1}}^{t_n} &\Ip{\dt \uu_{\tau h}}{\vv_{\tau h}}
+ A_\gamma(\boldsymbol u_{\tau h}(t))(\boldsymbol w_{\tau h}(t))\Dt
+ \sum_{n=1}^{N}\Ip{\jump{\uu_{\tau h}}_{n-1}}{\vv_{\tau h}^+(t_{n-1})}
\\
&=
\sum_{n=1}^{N}\int_{t_{n-1}}^{t_n}\Ip{\ff}{\vv_{\tau h}}\Dt
+ \sum_{n=1}^{N}\int_{t_{n-1}}^{t_n} B_\gamma(\boldsymbol g_D(t),\boldsymbol w_{\tau h}(t))\Dt.
\end{aligned}
\end{equation}
\end{problem}

We adopt the residual convention \emph{right-hand side minus left-hand side}.
For $\bm U=(\uu,\pi)\in \boldsymbol X_{\tau h}^k$ and
$\bm W=(\vv,q)\in \boldsymbol X_{\tau h}^k$, define the discrete residual
functional
\begin{equation}
\label{eq:disc-residual}
\begin{aligned}
\mathcal R_{\tau h}(\bm U;\bm W)
\coloneq {}&
\sum_{n=1}^{N}\int_{t_{n-1}}^{t_n}\Ip{\ff}{\vv}\,\Dt
+ \sum_{n=1}^{N}\int_{t_{n-1}}^{t_n} B_\gamma(\boldsymbol g_D(t),\bm W(t))\,\Dt
\\
&-
\sum_{n=1}^{N}\int_{t_{n-1}}^{t_n}\Ip{\dt \uu}{\vv}\,\Dt
- \sum_{n=1}^{N}\int_{t_{n-1}}^{t_n} A_\gamma(\bm U(t))(\bm W(t))\,\Dt
\\
&- \sum_{n=1}^{N}\Ip{\jump{\uu}_{n-1}}{\vv^+(t_{n-1})}.
\end{aligned}
\end{equation}
Then $\boldsymbol u_{\tau h}\in \boldsymbol X_{\tau h}^k$ solves \eqref{eq:disc} if and
only if
\[
  \mathcal R_{\tau h}(\boldsymbol u_{\tau h};\bm w_{\tau h})=0
  \qquad
  \forall\,\bm w_{\tau h}\in \boldsymbol X_{\tau h}^k.
\]

\begin{remark}[Global space-time form and time stepping]
\label{rem:time-step-wise}
Problem~\ref{Prob:DVP-pns} is written in monolithic global space-time form.
Because $\boldsymbol X_{\tau h}^k$ is discontinuous in time, the residual on a
time step $\In$ depends only on $\boldsymbol u_{\tau h}|_{\In}$ and on the left
trace $\uu_{\tau h}^-(t_{n-1})$ through the DG jump term. After fixing a basis
in $Y_\tau^k$ and ordering unknowns in temporal order, the global nonlinear
system is therefore causal and its Jacobian is block lower bidiagonal in time.
In the implementation we therefore realize the method by time marching: on each
time step $\In$ we solve one monolithic space-time problem for all temporal and
spatial degrees of freedom on $\In$, with the left trace from $t_{n-1}$ treated
as data. The multigrid preconditioner acts on this time-marching system, not on
the global space-time system at once.
\end{remark}

\begin{remark}[Gauss-Radau quadrature]\label{rem:gauss-radau}
In the implementation, time integrals involving state-dependent coefficients,
notably the $\pn$-stress and convection terms, are evaluated on each $\In$ by
the right-sided $(k{+}1)$-point Gauss-Radau rule.
The rule preserves the endpoint $t_n$ needed by the DG jump term and is exact
for polynomial integrands up to degree $2k$.
For the present nonlinear constitutive law this is a variational crime; its
effect is analyzed in Appendix~\ref{sec:gr-underint}.
\end{remark}

\section{Nonlinear solution strategies in each time step}
\label{sec:newton}

Each time-step problem produced by the fully implicit space-time discretization
is a large nonlinear saddle-point system. All nonlinear variants considered
below solve one fixed discrete residual equation and differ only in the
linearization used in the correction equation. The central method is modified
Newton; Picard and exact Newton are included only as reference linearizations
within the same discretization and the same algebraic framework.

The constitutive term is the decisive difficulty.
For $1<\pn < 2$, the exact constitutive derivative is anisotropic and its
smallest eigenvalue deteriorates as $\pn\downarrow 1$.
This is precisely the mechanism that can make exact Newton hard to globalize and
hard to precondition in strongly shear-thinning regimes.
We therefore use a modified Newton variant that changes only the constitutive
tangent while leaving the nonlinear residual unchanged.

\subsection{Residual functional and exact G\^ateaux derivative}
\label{sec:newton:gateaux}

Let $\mathcal R_{\tau h}(\bm U;\bm W)$ denote the discrete residual
\eqref{eq:disc-residual}.
Then $\boldsymbol u_{\tau h}\in \boldsymbol X_{\tau h}^k$ solves \eqref{eq:disc} if and
only if
\begin{equation}\label{eq:newton-res}
  \mathcal R_{\tau h}(\boldsymbol u_{\tau h};\bm W)=0
  \qquad \forall\,\bm W\in \boldsymbol X_{\tau h}^k.
\end{equation}
The Newton-type correction equation takes the form
$\mathcal J_m(\Delta \bm U^m,\bm W)=\mathcal R_{\tau h}(\bm U^m;\bm W)$ with an
additive update.
Let $\mathcal A_{\tau h}$ denote the left-hand side form of \eqref{eq:disc}, so
that $\mathcal R_{\tau h}$ is the right-hand side minus this left-hand side.
For $\bm U,\Delta\bm U\in \boldsymbol X_{\tau h}^k$, and for states
$\bm U$ at which the directional derivative exists, define
\begin{equation}\label{eq:gateaux-def}
  \mathcal A'_{\tau h}(\bm U)(\Delta\bm U,\bm W)
  \coloneq
  \left.\frac{\mathrm d}{\mathrm d\eps}
  \mathcal A_{\tau h}(\bm U+\eps\Delta\bm U;\bm W)\right|_{\eps=0}
  \qquad \forall\,\bm W\in \boldsymbol X_{\tau h}^k.
\end{equation}
Equivalently,
\[
  D_{\bm U}\mathcal R_{\tau h}(\bm U;\bm W)(\Delta\bm U)
  =
  -\mathcal A'_{\tau h}(\bm U)(\Delta\bm U,\bm W).
\]

For the smooth contributions to the discrete residual associated with
Problem~\ref{Prob:DVP-pns}, the Newton derivative is evaluated exactly.
It consists of the broken DG-in-time term, the differentiated viscous block,
the differentiated convective block, and the mixed pressure-velocity coupling,
including the Nitsche boundary contributions on~$\GhD$.
Here $\mathcal A'_{\nu,\tau h}(\uu)$ denotes the derivative of the viscous
volume term together with the state-dependent viscous boundary-flux term, while
$\mathcal A'_{c,\tau h}(\uu)$ denotes the derivative of the convective
volume and boundary terms that are differentiated in the Newton step.
The factors $\yneg{\uu\cdot\nn}$ on~$\GhD$ and
$\abs{\hat{\uu}\cdot\nn_F}$ in the CIP stabilization are only
Lipschitz with respect to the advecting field and are therefore lagged in
Variant~exN and Variant~modN; see \Cref{sec:newton:exact}.
With this convention, the residual equation itself is unchanged, and the only
genuinely nonlinear elliptic contribution is the constitutive linearization
$\mathcal A'_{\nu,\tau h}(\uu)$.

Given an iterate $\bm U^m\in \boldsymbol X_{\tau h}^k$, we compute a correction
$\Delta\bm U^m\in \boldsymbol X_{\tau h}^k$ from
\begin{equation}\label{eq:nonlinear-step}
  \mathcal J_m(\Delta\bm U^m,\bm W)
  = \mathcal R_{\tau h}(\bm U^m;\bm W)
  \qquad \forall\,\bm W\in \boldsymbol X_{\tau h}^k,
\end{equation}
and update
\[
  \bm U^{m+1}=\bm U^m+\lambda_m\Delta\bm U^m,
  \qquad \lambda_m\in(0,1].
\]
The methods differ only in the choice of the form $\mathcal J_m$. At a fixed
iterate $\uu^m$ set
\[
  \bm A^m \coloneq \DD\uu^m,
  \qquad
  \Delta^{m,2}\coloneq \reg^2+\abs{\bm A^m}^2,
  \qquad
  \mu^m \coloneq \nu(\Delta^{m,2})^{\frac{\pn-2}{2}},
  \qquad
  \eta^m \coloneq \nu_\infty+\mu^m.
\]
At a glance, Variant~Pic lags all state-dependent coefficients and omits the
constitutive derivative, Variant~exN differentiates all smooth residual terms
and lags only the nonsmooth inflow and CIP weights, and Variant~modN agrees
with Variant~exN except that the exact constitutive tangent is replaced by a
stress-clipped symmetric rank-one surrogate.

\subsection{Picard iteration (Variant Pic)}
\label{sec:newton:picard}

Picard freezes all state-dependent coefficients at the current iterate.
For the constitutive term this amounts to the tangent
\begin{equation}\label{eq:local-tangent-picard}
  \mathcal T_{\Pic}^m(\bm B)\coloneq \eta^m \bm B.
\end{equation}
That is, the effective viscosity is evaluated at the current iterate and the
derivative contribution of the stress law is omitted.
In addition, the advecting velocity, the inflow factor
$\yneg{\uu\cdot\nn}$ on $\GhD$, and the state-dependent CIP weight are
lagged at $\bm U^m$.
Hence Picard is a fixed-point iteration of the form \eqref{eq:nonlinear-step}
with a form $\mathcal J_m$.

\subsection{Exact Newton (Variant exN)}
\label{sec:newton:exact}

Recall \eqref{eq:constitutive-law}.
For $\reg>0$, $\bm B\in\mathbb{R}^{d\times d}_{\sym}$, the mapping $\bm A\mapsto \Sstress(\bm A)$ is continuously
differentiable, with Fr\'echet derivative
\begin{equation}\label{eq:pns-dstress}
  \DStress{\bm A}{\bm B}
  = \etaeff{\bm A}\,\bm B
  + \nu(\pn-2)\bigl(\reg^2+\abs{\bm A}^2\bigr)^{\frac{\pn-4}{2}}
    (\bm A:\bm B)\,\bm A.
\end{equation}
The exact constitutive tangent at $\bm A^m$ is the linear map $\mathcal
T_{\exact}^m(\bm B)\colon \mathbb{R}^{d\times d}_{\sym}\to \mathbb{R}^{d\times
  d}_{\sym}$ defined by
\begin{equation}\label{eq:local-tangent-exact}
\begin{split}
  \mathcal T_{\exact}^m(\bm B)
  &= \eta^m\bm B
   + \nu(\pn-2)(\Delta^{m,2})^{\frac{\pn-4}{2}}(\bm A^m:\bm B)\,\bm A^m \\
  &= \underbrace{\eta^m\bm B}_{\text{isotropic}}
    + \underbrace{(\pn-2)\frac{\mu^m}{\Delta^{m,2}}(\bm A^m:\bm B)\,\bm A^m}_{\text{rank-one
    correction in $\bm A^m$-direction}}.
\end{split}
\end{equation}
By Appendix~\ref{app:tangent-spectrum}, the eigenvalues of $\mathcal
T_{\exact}^m$ are
$\lambda_\perp=\eta^m$ on the subspace defined by $\{\bm B:\bm A^m:\bm B=0\}$ and
\begin{equation}\label{eq:exact-par-eig}
  \lambda_\parallel
  = \eta^m + (\pn-2)\frac{\mu^m}{\Delta^{m,2}}\abs{\bm A^m}^2
  = \nu_\infty+\mu^m
    \frac{\reg^2+(\pn-1)\abs{\bm A^m}^2}{\Delta^{m,2}}
\end{equation}
in the $\bm A^m$-direction.
From Appendix~\ref{app:tangent-spectrum}, in particular
\Cref{lem:rankone-tangent}, we obtain, for $1<\pn < 2$,
\begin{equation}\label{eq:ell-exact}
  \bigl(\nu_\infty+(\pn-1)\mu^m\bigr)\abs{\bm B}^2
  \le (\mathcal T_{\exact}^m(\bm B):\bm B)
  \le (\nu_\infty+\mu^m)\abs{\bm B}^2.
\end{equation}
For $\abs{\bm A^m}\gg\reg$, the anisotropy ratio behaves like
\begin{equation}\label{eq:local-cond-exact}
  \kappa(\mathcal T_{\exact}^m)
  \approx
  \frac{\nu_\infty+\mu^m}{\nu_\infty+(\pn-1)\mu^m},
\end{equation}
which deteriorates as $\pn\downarrow 1$ and $\mu^m\gg\nu_\infty$ due to the
factor $p-1$ in the smallest-eigenvalue direction. This is the source of
ill-conditioning in the shear-thinning regime.

For all smooth residual contributions, Variant~exN uses the G\^ateaux derivative,
\[
  \mathcal J_m(\Delta\bm U,\bm W)
  = \mathcal A'_{\tau h}(\bm U^m)(\Delta\bm U,\bm W),
\]
with constitutive part induced by \eqref{eq:local-tangent-exact}.
Thus Variant~exN is exact for the differentiable part of the residual.
The factors $\yneg{\uu\cdot\nn}$ on~$\GhD$ and
$\abs{\hat{\uu}\cdot\nn_F}$ in \eqref{eq:scip-def} are only Lipschitz in
the advecting field. Accordingly, in the implementation these weights are
lagged at the current iterate rather than differentiated.

\subsection{Modified Newton (Variant modN)}
\label{sec:newton:modified}
To obtain a more robust constitutive surrogate, we replace
$\mathcal T_{\exact}^m$ by a stress-clipped symmetric rank-one modification.
Define
\[
\bm \sigma^m \coloneq \mu^m \bm A^m,
\qquad
\hat{\bm\sigma}^m \coloneq s^m\bm\sigma^m,
\]
with
\[
s^m \coloneq
\begin{cases}
\min\{1,\sigma_{\max}/\abs{\bm\sigma^m}\}, & \bm\sigma^m\neq \bm 0,\\
0, & \bm\sigma^m=\bm 0,
\end{cases}
\]
so that $\abs{\hat{\bm\sigma}^m}\le \sigma_{\max}$.
Further set
\(
\theta^m \coloneq \frac{(\pn-2)}{\mu^m\Delta^{m,2}} .
\)
Since $\hat{\bm\sigma}^m=s^m\bm\sigma^m$, the modified constitutive tangent is
\begin{equation}\label{eq:local-tangent-clip}
\mathcal T_{\clip}^m(\bm B)
= \eta^m\bm B + s^m\theta^m(\bm\sigma^m:\bm B)\,\bm\sigma^m.
\end{equation}
Equivalently, using $\bm\sigma^m=\mu^m\bm A^m$,
\[
\mathcal T_{\clip}^m(\bm B)
= \eta^m\bm B + s^m(\pn-2)\frac{\mu^m}{\Delta^{m,2}}(\bm A^m:\bm B)\,\bm A^m.
\]
Hence Lemma~\ref{lem:rankone-tangent} applies and,
if $\bm\sigma^m\neq0$, the eigenvalues are
\[
  \lambda_\perp=\eta^m,
  \qquad
  \lambda_\parallel
  = \eta^m + s^m\theta^m |\bm\sigma^m|^2,
\]
with $\lambda_\perp$ on the subspace
$\{\bm B:\bm\sigma^m:\bm B=0\}$ and
$\lambda_\parallel$ in the direction $\bm\sigma^m$.
For $\abs{\bm A^m}\gg\reg$, the anisotropy ratio behaves like
\begin{equation}\label{eq:local-cond-clip}
  \kappa(\mathcal T_{\clip}^m)
  \approx
  \frac{\nu_\infty+\mu^m}
       {\nu_\infty+\bigl(1-s^m(2-\pn)\bigr)\mu^m},
\end{equation}
so that $s^m=1$ recovers \eqref{eq:local-cond-exact}, whereas $s^m=0$ yields
the isotropic Picard value $\kappa(\mathcal T_{\clip}^m)=1$. Hence, clipping
replaces the factor $\pn-1$ by $1-s^m(2-\pn)$ in the
smallest-eigenvalue direction and thereby weakens the anisotropy.

Modified Newton keeps the residual \eqref{eq:newton-res} unchanged and replaces
only the exact constitutive tangent.
More precisely, $\mathcal T_{\exact}^m$ is replaced by
\eqref{eq:local-tangent-clip} in the viscous volume term and in the matching
state-dependent boundary flux term.
All remaining contributions, in particular the time derivative, the mixed
pressure-velocity coupling, the convective linearization, and the
fixed-coefficient Nitsche terms, are treated as in Variant~exN.
The method is therefore a quasi-Newton iteration.

\subsection{Coercivity of the viscous-Nitsche term}
The next corollary records the coercivity property used later in the algebraic
discussion: under pointwise lower and upper bounds on the constitutive tangent,
the linearized viscous-Nitsche term remains coercive.
Let $\eta_D\coloneq \etaeff{\DD\hat{\boldsymbol g}_D}$ on $\GhD$ (hence
$\eta_D\ge \nu_\infty$), and define for $\uu_h,\vv_h\in\Vh$
\begin{equation}\label{eq:am-def}
  \begin{aligned}
    a_{\star}^m(\uu_h,\vv_h)
    \coloneqq\;&
                 \Ip{\mathcal T_\star^m(\DD\uu_h)}{\DD\vv_h}
                 -\Ip{\mathcal T_\star^m(\DD\uu_h)\nn}{\vv_h}_{\GhD}
                 -\Ip{\uu_h}{\DStress{\DD\hat{\boldsymbol g}_D}{\DD\vv_h}\nn}_{\GhD} \\
               &\quad
                 +\gOne\,\Ip{\hGD^{-1}\eta_D\,\uu_h}{\vv_h}_{\GhD}
                 +\gTwo\,\Ip{\hGD^{-1}(\uu_h\cdot\nn)}{(\vv_h\cdot\nn)}_{\GhD}.
  \end{aligned}
\end{equation}
Here $\hat{\boldsymbol g}_D$ is the lifting of $\boldsymbol g_D$ to $\Omega$.

\begin{corollary}[Coercivity of the linearized viscous-Nitsche term]\label{cor:linearized-viscous-coercive}
Fix a Newton state $\bm U^m$ and a tangent
$\mathcal T_\star^m:\mathbb{R}^{d\times d}_{\sym}\to\mathbb{R}^{d\times d}_{\sym}$.
Assume there exists $\overline\eta\ge \nu_\infty>0$ such that, for a.e.\ $x\in\Omega$ and all
$\bm B\in\mathbb{R}^{d\times d}_{\sym}$,
\begin{equation}\label{eq:tangent-bounds}
  \nu_\infty |\bm B|^2 \le \mathcal T_\star^m(\bm B):\bm B,
  \qquad
  |\mathcal T_\star^m(\bm B)|+\bigl|\DStress{\DD\hat{\boldsymbol g}_D}{\bm B}\bigr|
  \le \overline\eta\,|\bm B|.
\end{equation}
For the tangents used in this paper, these bounds hold in the uniformly elliptic
regime $\nu_\infty>0$.
There exists $C_{\mathrm{tr}}>0$, depending only on shape regularity and on
$\overline\eta/\nu_\infty$, such that for all $\uu_h\in\Vh$,
\begin{equation}\label{eq:am-coercive}
  a_{\star}^m(\uu_h,\uu_h)
  \ge \frac{\nu_{\infty}}{2}\,\|\DD\uu_h\|_{L^2(\Omega)}^2
  +\bigl(\gOne\,\nu_{\infty}-C_{\mathrm{tr}}\bigr)\,\|\hGD^{-\frac12}\uu_h\|_{L^2(\GhD)}^2
  +\gTwo\,\|\hGD^{-\frac12}(\uu_h\cdot\nn)\|_{L^2(\GhD)}^2.
\end{equation}
In particular, if $\gOne\ge 2C_{\mathrm{tr}}/\nu_{\infty}$ then $a_\star^m$ is
coercive on $\Vh$.
Moreover, inhomogeneous Dirichlet data enter only through the fixed
coefficients $\eta_D$ and $\DStress{\DD\hat{\boldsymbol g}_D}{\cdot}$; once the
bounds in \eqref{eq:tangent-bounds} hold, the coercivity argument is unchanged.
\end{corollary}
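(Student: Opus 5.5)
The plan is to set $\vv_h=\uu_h$ in \eqref{eq:am-def} and treat the five terms separately. The volume term is bounded below by the first inequality in \eqref{eq:tangent-bounds}, giving $\Ip{\mathcal T_\star^m(\DD\uu_h)}{\DD\uu_h}\ge \nu_\infty\|\DD\uu_h\|_{L^2(\Omega)}^2$. The two penalty terms are already in the required form. Since $\eta_D\ge\nu_\infty$, we have $\gOne\Ip{\hGD^{-1}\eta_D\uu_h}{\uu_h}_{\GhD}\ge \gOne\nu_\infty\|\hGD^{-1/2}\uu_h\|_{L^2(\GhD)}^2$, and the $\gTwo$ term appears unchanged on the right-hand side of \eqref{eq:am-coercive}. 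So everything reduces to controlling the two indefinite boundary flux terms.

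For these flux terms, I would first use the upper bound in \eqref{eq:tangent-bounds} to obtain, pointwise on $\GhD$,
\[
  \abs{\mathcal T_\star^m(\DD\uu_h)\nn\cdot\uu_h}
  +\abs{\uu_h\cdot \DStress{\DD\hat{\boldsymbol g}_D}{\DD\uu_h}\nn}
  \le \overline\eta\,\abs{\DD\uu_h}\,\abs{\uu_h}.
\]
The sum of the two flux terms is therefore bounded by $\overline\eta\,\|\hGD^{1/2}\DD\uu_h\|_{L^2(\GhD)}\,\|\hGD^{-1/2}\uu_h\|_{L^2(\GhD)}$. Next I would apply the discrete trace inverse inequality on each boundary element $T$ adjacent to a face $F\subset\GhD$. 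Shape regularity, with $\hGD\sim h_T$, gives $\|\hGD^{1/2}\DD\uu_h\|_{L^2(F)}\le C_{\mathrm{inv}}\|\DD\uu_h\|_{L^2(T)}$. Summing over faces with bounded overlap yields
\[
  \|\hGD^{1/2}\DD\uu_h\|_{L^2(\GhD)}\le C_{\mathrm{inv}}\|\DD\uu_h\|_{L^2(\Omega)},
\]
where $C_{\mathrm{inv}}$ depends only on shape regularity and the polynomial degree. This works because $\DD\uu_h$ is piecewise polynomial, with no coefficient entering.

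Then Young's inequality gives
\[
  \overline\eta C_{\mathrm{inv}}\|\DD\uu_h\|\,\|\hGD^{-1/2}\uu_h\|_{\GhD}
  \le \frac{\nu_\infty}{2}\|\DD\uu_h\|^2
  +\frac{\overline\eta^2C_{\mathrm{inv}}^2}{2\nu_\infty}\|\hGD^{-1/2}\uu_h\|_{\GhD}^2.
\]
Setting $C_{\mathrm{tr}}\coloneq \overline\eta^2C_{\mathrm{inv}}^2/(2\nu_\infty)$ and combining with the volume bound leaves $\tfrac{\nu_\infty}{2}\|\DD\uu_h\|^2$ and produces exactly \eqref{eq:am-coercive}. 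Strictly, $C_{\mathrm{tr}}$ scales as $\nu_\infty(\overline\eta/\nu_\infty)^2$ rather than depending only on the ratio. I would note this scaling and record it in the dependence, which is harmless because the final condition $\gOne\ge 2C_{\mathrm{tr}}/\nu_\infty = (\overline\eta/\nu_\infty)^2C_{\mathrm{inv}}^2$ depends only on the ratio and shape regularity.

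For the final claim, $\gOne\ge 2C_{\mathrm{tr}}/\nu_\infty$ gives $\gOne\nu_\infty-C_{\mathrm{tr}}\ge C_{\mathrm{tr}}>0$. Coercivity on $\Vh$ then follows because the resulting quantity
\[
  \tfrac{\nu_\infty}{2}\|\DD\uu_h\|^2+C_{\mathrm{tr}}\|\hGD^{-1/2}\uu_h\|_{\GhD}^2
\]
is a norm on $\Vh$. This uses Korn's inequality with boundary term, whose constant is assumed uniform, and requires $\GhD$ to have positive measure. The remark on inhomogeneous data is immediate, since $\hat{\boldsymbol g}_D$ enters only through the fixed coefficients covered by \eqref{eq:tangent-bounds}.

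I expect the main obstacle to be verifying the sentence claiming that the paper's tangents satisfy \eqref{eq:tangent-bounds}. For $\mathcal T_{\exact}^m$, \eqref{eq:ell-exact} gives the lower bound $\nu_\infty+(\pn-1)\mu^m\ge\nu_\infty$ and the upper bound $\nu_\infty+\mu^m\le\nu_\infty+\nu\reg^{\pn-2}$, using that $\mu^m\le\nu\reg^{\pn-2}$ for $\pn<2$. For $\mathcal T_{\clip}^m$, the rank-one perturbation has coefficient $s^m(\pn-2)\in[\pn-2,0]$, so \Cref{lem:rankone-tangent} gives the same bounds. The same argument bounds $D\Sstress$ at $\DD\hat{\boldsymbol g}_D$. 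Hence one may take $\overline\eta = 2(\nu_\infty+\nu\reg^{\pn-2})$.
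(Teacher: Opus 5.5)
Your proposal is correct and follows the paper's proof essentially step by step. You test with $\vv_h=\uu_h$, bound the volume term from below using the ellipticity in \eqref{eq:tangent-bounds}, control both boundary flux terms via the upper bound, Cauchy--Schwarz and a discrete trace inequality followed by Young's inequality, and use $\eta_D\ge\nu_\infty$ for the penalty term. Your explicit constant $C_{\mathrm{tr}}=\overline\eta^{2}C_{\mathrm{inv}}^{2}/(2\nu_\infty)$ is more careful than the paper, which reuses the symbol $C_{\mathrm{tr}}$ loosely across the Young step, and your remark that it scales like $\nu_\infty(\overline\eta/\nu_\infty)^2$ rather than depending on the ratio alone is accurate.
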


\begin{proof}
Set $\vv_h=\uu_h$ in \eqref{eq:am-def}.
By \eqref{eq:tangent-bounds},
$\Ip{\mathcal T_\star^m(\DD\uu_h)}{\DD\uu_h}\ge
\nu_\infty\|\DD\uu_h\|_{L^2(\Omega)}^2$.
Further, by the upper bound in \eqref{eq:tangent-bounds}, Cauchy-Schwarz, and
a discrete trace inequality,
\begin{equation*}
  \begin{split}
    &\bigl|\Ip{\mathcal T_\star^m(\DD\uu_h)\nn}{\uu_h}_{\GhD}\bigr|
    +\bigl|\Ip{\uu_h}{\DStress{\DD\hat{\boldsymbol g}_D}{\DD\uu_h}\nn}_{\GhD}\bigr|\\
    &\qquad\le
    C_{\mathrm{tr}}^{1/2}\|\DD\uu_h\|_{L^2(\Omega)}
    \|\hGD^{-1/2}\uu_h\|_{L^2(\GhD)}.
  \end{split}
\end{equation*}
Young's inequality yields
\begin{equation*}
\begin{split}
\bigl|\Ip{\mathcal T_\star^m(\DD\uu_h)\nn}{\uu_h}_{\GhD}\bigr|
&+\bigl|\Ip{\uu_h}{\DStress{\DD\hat{\boldsymbol g}_D}{\DD\uu_h}\nn}_{\GhD}\bigr|\\
&\quad\le \frac{\nu_\infty}{2}\|\DD\uu_h\|_{L^2(\Omega)}^2
   +C_{\mathrm{tr}}\|\hGD^{-1/2}\uu_h\|_{L^2(\GhD)}^2.
\end{split}
\end{equation*}
Finally, $\eta_D\ge \nu_\infty$ implies
$\gOne\,\Ip{\hGD^{-1}\eta_D\,\uu_h}{\uu_h}_{\GhD}\ge
 \gOne\nu_\infty\|\hGD^{-1/2}\uu_h\|_{L^2(\GhD)}^2$,
and the $\gTwo$-term is nonnegative.
Collecting the estimates gives \eqref{eq:am-coercive}.
\end{proof}

\section{Algebraic structure and assembly}
\label{sec:algebraic}
We now make the algebraic structure of one time step explicit.
This is the basis for matrix-free Jacobian actions and for the monolithic
space-time multigrid preconditioner used within the time-marching realization.

\subsection{Algebraic system}
Fix a time step $\In$~\eqref{eq:In-Sn}.
Let $\{\boldsymbol\varphi_i\}_{i=1}^{M_v}$ and $\{\psi_j\}_{j=1}^{M_p}$ be
bases of $\Vh$ and $Q_h$.
On $\In$ we use the Lagrange basis
$\{\ell_\mu\}_{\mu=1}^{k+1}\subset\Pk(\In)$ associated with the right-sided
Gauss-Radau nodes $\{t_n^\mu\}_{\mu=1}^{k+1}$.
Expanding
\[
  \uu_{\tau h}|_{\Sn} = \sum_{\mu=1}^{k+1} \sum_{i=1}^{M_v} V_{\mu i}^n\,\ell_\mu(t)\,\boldsymbol\varphi_i(x),
  \qquad
  \pi_{\tau h}|_{\Sn} = \sum_{\mu=1}^{k+1} \sum_{j=1}^{M_p} P_{\mu j}^n\,\ell_\mu(t)\,\psi_j(x),
\]
yields a coupled nonlinear system on $\Sn=\Omega\times I_n$,
\begin{equation}\label{eq:slab-system}
  \bm R_n(\bm U^n) = \bm 0,
  \qquad
  \bm U^n \coloneq \bigl((\bm V_\mu^n,\bm P_\mu^n)\bigr)_{\mu=1}^{k+1}\in\mathbb R^{(k+1)(M_v+M_p)},
\end{equation}
where $\bm V_\mu^n\in\mathbb R^{M_v}$ and $\bm P_\mu^n\in\mathbb R^{M_p}$
collect the spatial coefficients at the temporal node $t_n^\mu$.
Let $\mathcal R_n$ denote the time-step residual functional obtained from
\eqref{eq:disc-residual} by restricting to $\Sn$ and treating the left trace
$\uu_{\tau h}^-(t_{n-1})$ as given data.
The associated residual vector
$\bm R_n:\mathbb R^{(k+1)(M_v+M_p)}\to\mathbb R^{(k+1)(M_v+M_p)}$ is defined by
\[
  (\bm R_n(\bm U^n))_{(\mu,i)} \coloneq
  \mathcal R_n \left(\bm U^n;\,\ell_\mu\,\boldsymbol\varphi_i,0\right),
  \qquad
  (\bm R_n(\bm U^n))_{(\mu,j)} \coloneq
  \mathcal R_n \left(\bm U^n;\,0,\ell_\mu\,\psi_j\right).
\]
Then, finding an approximation to \Cref{Prob:DVP-pns} is equivalent to solving
\eqref{eq:slab-system} for all $\Sn$, $n=1,\dots,N$. The global algebraic system
corresponding to \Cref{Prob:DVP-pns} is obtained by stacking the time-step
vectors $\bm U^n$ and adding the DG jump couplings between consecutive time
steps. In the time-marching realization, the left trace
$\uu_{\tau h}^-(t_{n-1})$ enters $\In$ only through the DG jump term and is
treated as known when solving on $\In$. Accordingly, the implementation advances
from $t_{n-1}$ to $t_n$ by solving the monolithic local
system~\eqref{eq:slab-system} on each time step.

\subsection{Tensor-product structure and matrix-free operator evaluation}
On each $\Sn$ we evaluate time integrals involving state-dependent coefficients
by the right-sided $(k{+}1)$-point Gauss-Radau rule underlying the temporal
Lagrange basis.
This preserves the endpoint $t_n$ needed by the DG jump term and is exact on
$\mathbb P_{2k}(\In)$.
For nonlinear constitutive laws this introduces a quadrature error; see
Appendix~\ref{sec:gr-underint}.

Let $\hat I=(0,1)$ and let $\{\hat\xi_i\}_{i=1}^{k+1}$ be the Gauss-Radau
Lagrange basis on $\hat I$.
The temporal matrices on $\In$ are obtained from the reference interval by
affine pullback; for brevity we denote them again by
$\bm M_t\in\mathbb R^{k+1\times k+1}$,
$\bm K_t\in\mathbb R^{k+1\times k+1}$, and
$\bm m_t\in\mathbb R^{k+1}$.
On the reference interval they are given by
\begin{equation*}
  {(\bm M_t)}_{i,\,j} \coloneq
  \int_0^1 \hat\xi_j(\hat t)\hat\xi_i(\hat t)\drv\hat t,\quad
  {(\bm K_t)}_{i,\,j} \coloneq
  \int_0^1 \hat\xi_j'(\hat t)\hat\xi_i(\hat t)\drv\hat t
  + \hat\xi_j(0)\hat\xi_i(0),\quad
  {(\bm m_t)}_{i} \coloneq  \hat\xi_i(0),
\end{equation*}
for $i,j=1,\dots,k+1$.
The endpoint term $\hat\xi_j(0)\hat\xi_i(0)$ encodes the left-sided DG jump.
Because $\hat\xi_i\hat\xi_j\in \mathbb P_{2k}(\hat I)$ and the $(k{+}1)$-point
Gauss-Radau rule is exact on $\mathbb P_{2k}(\hat I)$, the mass matrix
$\bm M_t$ coincides with the diagonal matrix of Gauss-Radau weights in this
nodal basis. Note that the left endpoint is not a quadrature node of the
right-sided Gauss-Radau rule.

Let $\bm M_x\in\mathbb R^{M_v\times M_v}$ be the spatial velocity mass matrix,
\[
  (\bm M_x)_{i,j} \coloneq  (\boldsymbol\varphi_j,\boldsymbol\varphi_i)_{L^2(\Omega)}.
\]
Write the time-step unknown in velocity-pressure block form
$\bm U^n=(\bm V^n,\bm P^n)$ with
\[
  \bm V^n=(\bm V_1^n,\ldots,\bm V_{k+1}^n)\in\mathbb R^{(k+1)M_v},\qquad
  \bm P^n=(\bm P_1^n,\ldots,\bm P_{k+1}^n)\in\mathbb R^{(k+1)M_p}.
\]
For each temporal node $\mu$, let
$\bm F_\mu(\bm V_\mu^n,\bm P_\mu^n)\in\mathbb R^{M_v+M_p}$ denote the assembled
\emph{spatial residual vector} at $t=t_n^\mu$, with the same sign convention as
$\bm R_n$, i.e.\ right-hand side minus spatial left-hand side, excluding the DG
time derivative and the DG jump.
Let $\bm J_\mu$ denote the corresponding spatial linearization block used in the
correction equation \eqref{eq:nk-newton-lin}, i.e.\ the matrix representation of
the selected left-hand-side linearization on the node $t_n^\mu$.
Equivalently, $\bm J_\mu=-D\bm F_\mu$ for the chosen nonlinear variant.
For Variant~exN, $\bm J_\mu$ differentiates all smooth left-hand-side terms,
with the nonsmooth inflow and CIP weights lagged as described in
\Cref{sec:newton:exact}. For Variant~Pic and Variant~modN, $\bm J_\mu$ is the
analogous spatial linearization obtained from the same residual by the
corresponding constitutive replacement from \Cref{sec:newton}.

Because each $\bm F_\mu$ is assembled at the single node $t_n^\mu$, it depends
only on $(\bm V_\mu^n,\bm P_\mu^n)$ and not on nodal values at any other
Gauss-Radau node.
Consequently, the spatial Jacobian blocks decouple across temporal nodes and
form the block diagonal matrix
$\diag(\bm J_1,\ldots,\bm J_{k+1})$.
Define the stacked spatial residual
$\bm F(\bm U^n)\coloneq (\bm F_1,\ldots,\bm F_{k+1})$ and let
$\bm v_{n-1}^-\in\mathbb R^{M_v}$ denote the coefficient vector of the left
trace $\uu_{\tau h}^-(t_{n-1})$.

Then the time-step residual and Jacobian can be written as
\begin{equation}\label{eq:slab-underint}
  \bm R_n(\bm U^n)
  =
  -
  \begin{bmatrix}
    (\bm K_t\otimes \bm M_x)\,\bm V^n - (\bm m_t\otimes \bm M_x)\,\bm v_{n-1}^-\\[0.2em]
    \bm 0
  \end{bmatrix}
  +
  (\bm M_t\otimes \bm I_{M_v+M_p})\,\bm F(\bm U^n),
\end{equation}
\begin{equation}\label{eq:slab-underint-jac}
  \bm J_n(\bm U^n)
  =
  \begin{bmatrix}
    (\bm K_t\otimes \bm M_x) &\bm 0\\[0.2em]
    \bm 0&\bm 0
  \end{bmatrix}
  +
  (\bm M_t\otimes \bm I_{M_v+M_p})\,
  \diag(\bm J_1,\ldots,\bm J_{k+1}).
\end{equation}
Hence a matrix-free Jacobian action reduces to independent spatial kernel
applications $\bm J_\mu(\cdot)$ at the Gauss-Radau nodes, followed by temporal
scaling with $\bm M_t$ and the dense temporal coupling from $\bm K_t$.

Each spatial Jacobian block
$\bm J_\mu\in\mathbb R^{(M_v+M_p)\times(M_v+M_p)}$ has the mixed form
\begin{equation}\label{eq:Jmu-block}
  \bm J_\mu
  =
  \begin{bmatrix}
    \bm A_\mu & \bm B_\mu^\top\\
    \bm B_\mu & \bm 0
  \end{bmatrix},
\end{equation}
where $\bm B_\mu$ denotes the discrete incompressibility block, including the
Nitsche boundary contribution on $\GhD$, and $\bm B_\mu^\top$ the
corresponding pressure coupling block.
Moreover,
\begin{equation}\label{eq:Amu-decomp}
  \bm A_\mu = \bm A_{\nu,\mu}(\bm U^n) + \bm A_{c,\mu}(\bm U^n) + \bm A_{\mathrm{CIP},\mu}(\bm U^n).
\end{equation}
Here $\bm A_{\nu,\mu}$ collects the linearized viscous volume term and the
state-dependent boundary flux term with the tangent variant selected in
\Cref{sec:newton}; the fixed-coefficient Nitsche terms are unchanged from the
discretization.
Further, $\bm A_{c,\mu}$ denotes the linearization of convection, including the
inflow boundary term, and $\bm A_{\mathrm{CIP},\mu}$ the linearization of the
CIP stabilization.
If the advecting field in the CIP weight is lagged, then
$\bm A_{\mathrm{CIP},\mu}$ is symmetric positive semidefinite, because
\eqref{eq:scip-def} is an $L^2(F)$ inner product of identical jumps with
nonnegative weight.
Thus lagged CIP cannot reduce the coercivity of the velocity terms.

A representative-state surrogate is used only inside the local Vanka patch
solves of the multigrid preconditioner.
The nonlinear residual and the global matrix-free Jacobian action always use the
physical operator associated with the selected nonlinear variant.
The surrogate affects only local patch assembly inside the smoother;
see \Cref{sec:mg-framework}.

\section{Scalable algebraic nonlinear and linear solvers}
\label{sec:newton-krylov}

The nonlinear iterations from \Cref{sec:newton} are realized by time marching in
a matrix-free Newton-Krylov framework, with one monolithic space-time solve per
time step and one space-time multigrid V-cycle as preconditioner.
We restrict the discussion to the aspects specific to the present
$(\pn,\reg)$-Navier-Stokes setting.
On each time step $\In$, the fully implicit discretization yields a nonlinear
system
\[
  \bm R_n(\bm U^n)=\bm 0,
\]
where $\bm U^n$ collects all space-time degrees of freedom on $\Sn$ and
$\bm R_n$ is the algebraic realization of $\mathcal R_{\tau h}(\bm U;\bm W)$ in
the test basis. We write $\bm U^{n,m}$ for the $m$-th nonlinear iterate on the
slab $\Sn$, and $\bm J_n(\bm U^{n,m})\coloneq -D\bm R_n(\bm U^{n,m})$ for the
corresponding algebraic linearization. Thus $\bm J_n$ is the matrix
representation, in the chosen test basis, of the form
$\mathcal J_m$ from \eqref{eq:nonlinear-step}. Given an iterate $\bm U^{n,m}$, we compute a correction
$\Delta\bm U^{n,m}$ from
\begin{equation}\label{eq:nk-newton-lin}
  \bm J_n(\bm U^{n,m})\Delta\bm U^{n,m}
  = \bm R_n(\bm U^{n,m}),
  \qquad
  \bm U^{n,m+1}=\bm U^{n,m}+\lambda_m\Delta\bm U^{n,m}.
\end{equation}
Here $\bm J_n(\bm U^{n,m})$ denotes the time-step linearization induced by the
selected nonlinear variant: the operator for Picard, the Jacobian of the
differentiable residual terms for Variant~exN, and the quasi-Newton operator for
modified Newton (Variant~modN).

\subsection{Krylov solver and preconditioning}

The linear systems \eqref{eq:nk-newton-lin} are solved by right-preconditioned
FGMRES with one STMG V-cycle as preconditioner.
The matrix-free operator evaluation, transfer operators, coarse-grid treatment,
and $hp$ multigrid hierarchy follow
\cite{MargenbergBause2026MonolithicSTMG,MargenbergMunchBause2025hpMGStokes} and
are not repeated here.

We measure algebraic residuals in the block mass-weighted norm
\[
  \norm{(\bm r_v,\bm r_p)}_{\mathcal M}^2
  \coloneq
  \bm r_v^\top \bm M_v \bm r_v+\bm r_p^\top \bm M_p \bm r_p,
\]
where $\bm M_v$ and $\bm M_p$ denote the time-step velocity and pressure mass blocks.
For exact Newton and modified Newton, FGMRES is terminated by the inexact-solve
condition
\begin{equation}\label{eq:nk-inexact-newton}
  \norm{\bm J_n(\bm U^{n,m})\Delta\bm U^{n,m}-\bm R_n(\bm U^{n,m})}_{\mathcal M}
  \le
  \eta_m \norm{\bm R_n(\bm U^{n,m})}_{\mathcal M},
\end{equation}
with an adaptive forcing term $\eta_m$ (cf.~\cite{MargenbergBause2026MonolithicSTMG}).
For Picard, we use the same Krylov solver with a prescribed relative tolerance.
Concrete parameter values are reported in \Cref{sec:numerics}.

\subsection{Globalization and stopping criteria}

For exact Newton and modified Newton, we use Armijo backtracking on the
current slab $\Sn$, with
\[
  \phi_n(\bm U)\coloneq \tfrac12 \norm{\bm R_n(\bm U)}_{\mathcal M}^2.
\]
Picard steps are taken with $\lambda_m=1$ unless additional residual-based
damping is required.

The nonlinear iteration is terminated once
\begin{equation}\label{eq:nk-stop}
  \norm{\bm R_n(\bm U^{n,m})}_{\mathcal M}\le \tau_{\mathrm{abs}}
  \qquad\text{or}\qquad
  \norm{\bm R_n(\bm U^{n,m})}_{\mathcal M}
  \le
  \tau_{\mathrm{rel}}\norm{\bm R_n(\bm U^{n,0})}_{\mathcal M}.
\end{equation}
We additionally impose a maximal number of nonlinear iterations and declare the
time-step solve unsuccessful if the line search or the linear solver fails to meet
the prescribed tolerances.

\subsection{Multigrid preconditioning and surrogate patch assembly}
\label{sec:mg-framework}

We use one space-time multigrid V-cycle as preconditioner in each Krylov step.
Here ``space-time multigrid'' refers to multigrid applied to the monolithic
system on one time step $\Sn$, including all temporal basis functions on that
interval; we do not apply multigrid to the whole time horizon at once.
The general $hp$ space-time multigrid construction, including the level
hierarchy, transfer operators, and coarse-grid treatment, is the same as in
\cite{MargenbergBause2026MonolithicSTMG,MargenbergMunchBause2025hpMGStokes}.
The only approximation specific to the present nonlinear setting is a
finest-level surrogate assembly of local patch matrices. On coarser levels the
patch matrices are taken from the exact Petrov-Galerkin operators.

\paragraph{Hierarchy and coarse operators}
A level $\ell\in\{0,\dots,L\}$ is specified by a spatial mesh $\mathcal M_\ell$
and a temporal DG degree $k_\ell$, with finest-level operator
\[
  \bm A_L \coloneq \bm J_n(\bm U^{n,m}).
\]
The interlevel transfer operators are the tensor-product prolongation and
restriction operators from
\cite{MargenbergBause2026MonolithicSTMG,MargenbergMunchBause2025hpMGStokes},
and the coarse operators are defined recursively by the Petrov-Galerkin
relation
\[
  \bm A_\ell \coloneq \bm \Pi_\ell \bm A_{\ell+1}\bm P_\ell,
  \qquad \ell=L-1,\dots,0.
\]

\paragraph{Vanka smoothing}
On each level we use one pre- and one post-smoothing step of an overlapping
additive Schwarz method of Vanka type.
For a spatial patch $\omega_K$ with index set $I_K$ of velocity and pressure
degrees of freedom, the associated space-time patch on $\Sn$ contains all
temporal coefficients attached to the unknowns in $I_K$; equivalently, the local
Vanka solve is performed on $\omega_K\times \In$.
With $\bm R_K$ denoting the restriction to this patch, one smoothing step is
\begin{equation}\label{eq:stmg-vanka-update}
  \bm d \leftarrow
  \bm d + \omega \sum_{K\in\mathcal M_\ell}
  \bm R_K^\top
  (\bm R_K \bm A_\ell \bm R_K^\top)^{-1}
  \bm R_K(\bm r-\bm A_\ell \bm d),
\end{equation}
with damping parameter $\omega\in(0,1]$.
The choice of cell patches versus vertex-star patches in space is as in
\cite{MargenbergBause2026MonolithicSTMG,MargenbergMunchBause2025hpMGStokes};
in time we always use the full time-step block.

\paragraph{Exact patch matrices}
The time-step operator $\bm A_\ell$ is never assembled globally.
On the finest level its action is obtained by quadrature-based matrix-free
evaluation; on coarser levels it is defined by Petrov-Galerkin projection.
For every level, the exact patch matrix is
\[
  \bm A_{\ell,K}\coloneq \bm R_K \bm A_\ell \bm R_K^\top.
\]
Only the finest level is locally reassembled from time-node spatial blocks.
There, the state-dependent spatial blocks in \eqref{eq:slab-underint-jac}
depend on the Gauss-Radau node, so exact local patch assembly involves
$k_L+1$ distinct spatial blocks on the same time step.
More precisely, with
\[
  \bm J_{L,\mu}\coloneq -D\bm F_{\mu}(\bm U^{n,m}),
  \qquad \mu=1,\dots,k_L+1,
\]
the finest-level time-local factor contains the block diagonal matrix
\[
  \diag(\bm J_{L,1},\dots,\bm J_{L,k_L+1}).
\]
Consequently, exact finest-level patch assembly would require, for every patch
$K$, the family of spatial patch blocks
\[
  \bm J_{L,\mu,K}\coloneq
  \bm R_K^{\mathrm s}\bm J_{L,\mu}(\bm R_K^{\mathrm s})^\top,
  \qquad \mu=1,\dots,k_L+1,
\]
where $\bm R_K^{\mathrm s}$ denotes the purely spatial patch restriction.

\paragraph{Surrogate patch assembly}
The only approximation in the smoother concerns the node-wise spatial block in
\eqref{eq:slab-underint-jac} on the finest level.
With the exact patch matrix
$\bm A_{L,K}=\bm R_K \bm A_L \bm R_K^\top$ from the preceding paragraph,
all temporal coefficient matrices from
\eqref{eq:slab-underint}--\eqref{eq:slab-underint-jac}, in particular
$\bm M_t$ and $\bm K_t$, are kept unchanged.

Choose a representative time $t_n^{\mathrm{rep}}\in \In$ and define a
representative spatial linearization block by evaluating all state-dependent
coefficients entering the locally reassembled finest-level patch operator once
at $t_n^{\mathrm{rep}}$ and then freezing them over the time step.
In the present computations we use the midpoint of $\In$.

At the level of locally reassembled finest-level patch matrices, the surrogate
replaces the family of time-node blocks by one representative block,
\begin{equation}\label{eq:surrogate-diag}
  \diag(\bm J_{L,1},\dots,\bm J_{L,k_L+1})
  \quad\leadsto\quad
  \bm I_{k_L+1}\otimes \bm J_L^{\mathrm{rep}},
\end{equation}
with all remaining temporal factors unchanged.
The corresponding surrogate finest-level patch matrix is denoted by
\[
  \tilde{\bm A}_{L,K}.
\]

Thus the surrogate replaces the $k_L+1$ time-dependent spatial blocks on one
time step by one representative spatial block, while preserving the temporal
coupling encoded by the original time-step matrices.
The approximation is confined to the local finest-level patch solve:
\[
  (\bm R_K \bm A_L \bm R_K^\top)^{-1}
  \quad\leadsto\quad
  \tilde{\bm A}_{L,K}^{-1}.
\]
The global defect $\bm r-\bm A_\ell \bm d$, the matrix-free operator action,
and the coarse-grid operators remain exact. On coarser levels we use the exact
patch matrices $\bm A_{\ell,K}$ extracted from the Petrov-Galerkin operators.
We show in Appendix~\ref{app:patch-surrogate} that the finest-level patch
perturbation is controlled by the time-step size.

\paragraph{Interpretation and refresh strategy}
The surrogate is a purely local coefficient freezing in the finest-level Vanka
patch matrices. It does not change the nonlinear residual or the matrix-free
global Jacobian action; it changes only the local patch solves in the multigrid
smoother and thus reduces smoother setup costs.

Within one FGMRES solve, the patch matrices used by the Vanka smoother and
their factorizations are kept fixed. They are assembled at the beginning of
each Newton or modified Newton step. Before a subsequent linear solve, the
finest-level surrogate patch matrices are refreshed only if the preceding solve
has deteriorated. Following \cite{MargenbergBause2026MonolithicSTMG}, we
trigger a rebuild when
\[
  \frac{\norm{\bm R_n(\bm U^{n,m+1})}_{\mathcal M}}
  {\norm{\bm R_n(\bm U^{n,m})}_{\mathcal M}}>\rho,
\]
or when the FGMRES iteration count from the preceding linear solve exceeds
twice that of the previous nonlinear step; see \Cref{sec:numerics} for the
parameters.

\section{Numerical experiments}
\label{sec:numerics}
We first verify the fully discrete implementation by a manufactured-solution
convergence test in \Cref{sec:conv-ns} and then study a genuinely time-dependent
benchmark problem in \Cref{sec:dfg-bench}. Throughout, the emphasis is on the fully implicit
space-time solver: nonlinear robustness, linear iteration counts, time dependence across time steps, and computational efficiency.

In space we use inf-sup stable pairs
$\mathbb{Q}_{2}/\mathbb{P}_1^{\mathrm{disc}}$, and in time a DG$(1)$ discretization.
Nonlinear systems are solved by the framework of
\Cref{sec:newton,sec:newton-krylov}. The manufactured-solution and
parameter studies in \Cref{sec:conv-ns} compare Picard, Newton, and modified Newton solvers
in the same algebraic framework. For the time-dependent DFG benchmark we focus
on the modified Newton, which was the most reliable option in the preceding tests.
We terminate the nonlinear iteration when the residual satisfies an absolute tolerance of $10^{-12}$ and a relative tolerance of
$10^{-10}$. These strict criteria are chosen to verify that the modified tangents do not introduce an accuracy barrier, i.e.,
they permit reduction of the residual to near machine precision rather than stagnating at a tangent-induced floor.
We use Eisenstat-Walker forcing and Armijo
backtracking as described in \Cref{sec:newton-krylov}. Each linearized subproblem is treated by FGMRES preconditioned with a single
$V$-cycle geometric multigrid with Vanka-type smoothing.

The numerical experiments were performed on a workstation with an Intel Xeon Gold 6254 CPU and 396GB of RAM.
The implementation is based on the deal.II library and openly available
at~\cite{margenberg_scalable_2026}. We call the method \textbf{$h$-robust in Newton} if the nonlinear
iteration counts stay bounded under mesh refinement, and \textbf{$h$-robust in
  FGMRES} if the linear iteration counts remain bounded under the same
refinement. We use two pre- and two post-smoothing steps throughout this
section. To quantify the effect of inertia we vary $\nu_\infty$. We choose a
characteristic length $L$ and velocity scale $U$ (e.\,g.\ the channel width and
mean inflow velocity for the ``flow around a cylinder'' benchmark) and evaluate
the apparent viscosity at the characteristic shear rate
$\dot\gamma_c\coloneq U/L$. For the constitutive
law~\eqref{eq:constitutive-law}, the associated scalar apparent viscosity
(cf.~\cite[Section~4.1]{BirdArmstrongHassager1987DPL1}) and the apparent
Reynolds number are
\begin{equation}\label{eq:reapp}
  \eta_{\mathrm{app}}(\dot\gamma)
  \coloneq \nu_\infty + \nu\bigl(\reg^2+\dot\gamma^2\bigr)^{\frac{\pn-2}{2}},\quad\mathrm{Re}_{\mathrm{app}}
  \coloneq \frac{U L}{\eta_{\mathrm{app}}(\dot\gamma_c)}\,.
\end{equation}

\subsection{\label{sec:conv-ns}Convergence test}
\emph{Problem setup.} We consider a model problem on the space-time domain $\Omega\times \It = {[0,1]}^2\times [0,1]$ with prescribed velocity
$\uu \colon \Omega\times \It \to \mathbb{R}^2$ and pressure
$\pi \colon \Omega\times \It \to \mathbb{R}$ given by
\begin{subequations}\label{eq:conv-test}
  \begin{align}
    \label{eq:conv-test-v}
    \uu(\mathbf{x},\,t) &= \sin(t) \begin{pmatrix} \sin^2(\pi x) \sin(\pi y) \cos(\pi y) \\
      -\sin(\pi x) \cos(\pi x) \sin^2(\pi y) \end{pmatrix},\\
    \label{eq:conv-test-p} \pi(\mathbf{x},\,t) &= \sin(t) \sin(\pi x) \cos(\pi x)
                                               \sin(\pi y) \cos(\pi y)\,.
  \end{align}
\end{subequations}
We choose the forcing term $\ff$ such that $(\uu,\pi)$ satisfies the
$(\pn,\reg)$-Navier-Stokes system~\eqref{eq:pns-strong} with stress
law~\eqref{eq:constitutive-law}. The velocity field in~\eqref{eq:conv-test-v} is
divergence-free. The purpose of this test is verification of the fully discrete
implementation; the example is nevertheless nontrivial because the velocity
gradients vanish in the corners and at the center of the domain (see~\Cref{fig:snapshots}). We prescribe
homogeneous data, \(\uu=\bm 0 \text{ on }\Omega\times \{0\}\),
\(\uu=\bm 0 \text{ on } \partial \Omega\times (0, T]\). The space-time mesh
$\Th\otimes\mathcal{M}_{\tau}$ is a uniform tensor-product mesh of
$\Omega\times \It$ and is refined uniformly in space and time. The main error quantity is
$\|\Phi_\reg(\DD\uu)-\Phi_\reg(\DD\uu_{\tau h})\|_{L^2((0,T);\,L^2(\Omega))}$,
i.e.\ the space-time $L^2(L^2)$-norm of
\begin{equation}\label{eq:nat-dist}
  \Phi_\reg(\bm A)\coloneq {(\reg^2+|\bm A|^2)}^{\frac{\pn-2}{4}}\bm A.
  \qquad \bm A\in\mathbb{R}^{d\times d}_{\mathrm{sym}}.
\end{equation}
\begin{table}[htb]
  \caption{Errors for
    $\mathbb{Q}_2^2/\mathbb{P}_1^{\disc}/\mathrm{DG}(1)$
    discretizations of the $(\pn,\reg)$-Navier-Stokes system
    with solution~\eqref{eq:conv-test} in the natural
    distance~\eqref{eq:nat-dist} and divergence of the velocity.}\label{tab:conv-pns}
  \begin{subcaptionblock}{\textwidth}
    \caption{Calculated errors in the space-time $L^2$-norm with eoc for $\nu_\infty=0$.}
    \setlength{\tabcolsep}{5pt} \centering
    \begin{tabular}{l|llll|llll}
      \toprule
      &\multicolumn{4}{c|}{$\pn=\tfrac32$, $\reg=10^{-15}$, $\nu=10^{-2}$, $\nu_{\infty}=0$}
      &\multicolumn{4}{c}{$\pn=\tfrac54$, $\reg=10^{-5}$, $\nu=10^{-2}$, $\nu_{\infty}=0$}\\[4pt]
      $h$ &
      $e^{\Phi_{\reg}(\DD\uu)}_{L^2(L^2)}$ &eoc&$e^{\boldsymbol\nabla\cdot \uu}_{L^2(L^2)}$&eoc&
      $e^{\Phi_{\reg}(\DD\uu)}_{L^2(L^2)}$ &eoc&$e^{\boldsymbol\nabla\cdot \uu}_{L^2(L^2)}$&eoc \\
      \midrule
      ${2}^{-2}$ &\num{4.87470e-01} &     &\num{2.9097e-01}&     &\num{4.48403e-01} &     &\num{2.6211e-01} &     \\
      ${2}^{-3} $&\num{1.90222e-01} &1.36 &\num{6.4618e-02}&2.17 &\num{1.78193e-01} &1.33 &\num{6.6096e-02} &1.99 \\
      ${2}^{-4} $&\num{8.41645e-02} &1.18 &\num{1.0454e-02}&2.63 &\num{7.59268e-02} &1.23 &\num{1.1344e-02} &2.54 \\
      ${2}^{-5}$& \num{4.00681e-02} &1.07 &\num{1.5706e-03}&2.73 &\num{3.37933e-02} &1.17 &\num{1.7076e-03} &2.73 \\
      ${2}^{-6}$& \num{1.92389e-02} &1.06 &\num{2.6846e-04}&2.55 &\num{1.49391e-02} &1.18 &\num{2.8323e-04} &2.59 \\
      ${2}^{-7}$& \num{8.33996e-03} &1.21 &\num{5.6162e-05}&2.26 &\num{6.60254e-03} &1.18 &\num{5.5898e-05} &2.34 \\
      \bottomrule
    \end{tabular}
  \end{subcaptionblock}
  \begin{subcaptionblock}{\textwidth}
    \caption{Calculated errors in the space-time $L^2$-norm with eoc for $\nu_\infty=10^{-5}$.}
    \setlength{\tabcolsep}{5pt} \centering
    \begin{tabular}{l|llll|llll}
      \toprule
      &\multicolumn{4}{c|}{$\pn=\tfrac32$, $\reg=10^{-15}$, $\nu=10^{-2}$, $\nu_{\infty}=10^{-5}$}
      &\multicolumn{4}{c}{$\pn=\tfrac54$, $\reg=10^{-5}$, $\nu=10^{-2}$, $\nu_{\infty}=10^{-5}$}\\[4pt]
      $h$ &
      $e^{\Phi_{\reg}(\DD\uu)}_{L^2(L^2)}$ &eoc&$e^{\boldsymbol\nabla\cdot \uu}_{L^2(L^2)}$&eoc&
      $e^{\Phi_{\reg}(\DD\uu)}_{L^2(L^2)}$ &eoc&$e^{\boldsymbol\nabla\cdot \uu}_{L^2(L^2)}$&eoc \\
      \midrule
      ${2}^{-2}$ &\num{4.87269e-01} &     &\num{2.9073e-01}&     &\num{4.48239e-01} &     &\num{2.6191e-01} &     \\
      ${2}^{-3} $&\num{1.90169e-01} &1.36 &\num{6.4554e-02}&2.17 &\num{1.78137e-01} &1.33 &\num{6.6026e-02} &1.99 \\
      ${2}^{-4} $&\num{8.41497e-02} &1.18 &\num{1.0443e-02}&2.63 &\num{7.59129e-02} &1.23 &\num{1.1330e-02} &2.54 \\
      ${2}^{-5}$& \num{4.00616e-02} &1.07 &\num{1.5692e-03}&2.73 &\num{3.37881e-02} &1.17 &\num{1.7057e-03} &2.73 \\
      ${2}^{-6}$& \num{1.92355e-02} &1.06 &\num{2.6832e-04}&2.55 &\num{1.49369e-02} &1.18 &\num{2.8304e-04} &2.59 \\
      ${2}^{-7}$& \num{8.33866e-03} &1.21 &\num{5.6155e-05}&2.26 &\num{6.59242e-03} &1.18 &\num{5.5903e-05} &2.34 \\
      \bottomrule
    \end{tabular}
  \end{subcaptionblock}
\end{table}

\paragraph{Verification}
To verify the implementation in the non-uniformly elliptic setting, we consider
$(\pn,\reg)=(1.5,10^{-15})$ and $(\pn,\reg)=(1.25,10^{-5})$ with $\nu_\infty=0$
or $\nu_{\infty}=10^{-5}$ and $\nu=10^{-2}$. For $\nu_\infty>0$, decreasing
$\nu_\infty$ increases the apparent Reynolds number in \eqref{eq:reapp}.
Table~\ref{tab:conv-pns} reports errors and experimental orders of convergence
(eoc) for the manufactured solution. The computations were performed with the modified Newton solver. In this regime, $(\pn,\reg)$-structure
theory does not predict a uniform second-order rate in the natural distance
without additional uniform-ellipticity and regularity assumptions. We therefore
use the table as a consistency check for the
$\mathbb{Q}_2^2/\mathbb{P}_1^{\disc}/\mathrm{DG}(1)$ discretization. Both the
natural-distance error $e^{\Phi_{\reg}(\DD\uu)}_{L^2(L^2)}$ and the divergence
of the velocity $e^{\nabla\cdot\uu}_{L^2(L^2)}$ decay under uniform space-time
refinement. Moreover, $e^{\nabla\cdot\uu}_{L^2(L^2)}$ decreases essentially
quadratically, whereas $e^{\Phi_{\reg}(\DD\uu)}_{L^2(L^2)}$ shows a
$\pn$-dependent subquadratic decay on the reported levels. At the reported
precision, the results for $\nu_{\infty}=0$ and $\nu_{\infty}=10^{-5}$ are
numerically indistinguishable. Therefore, in the following we focus on the $\nu_{\infty}=0$
case.

\paragraph{Solver evaluation}
\label{sec:crit-bench}
We assess solver robustness for parameter tuples
\(
  (\pn,\reg,\nu,\nu_\infty)\in\mathcal P,
\)
where
\begin{equation*}
\begin{split}
  \mathcal P
  &\coloneq
  \{1.16,1.2,1.25,1.33,1.5,1.66\}
  \times
  \{10^{-5},10^{-10},10^{-15},10^{-20}\}\\
  &\qquad\times
  \{10^{-1},10^{-2},10^{-3}\}
  \times
  \{10^{-5},0\}.
\end{split}
\end{equation*}
The full parameter set $\mathcal P$ is large. Thus, we compare Picard (Pic),
exact Newton (exN) and modified Newton (modN). For these nonlinear solvers, all other solver components, in
particular the tolerances, are kept fixed. The convergence test is run on meshes
ranging from 16 to \num[scientific-notation=false,round-precision=0]{262144}
cells, which corresponds to \num{2887682} spatial degrees of freedom (dof). We first inspect representative parameter subsets and then summarize the full benchmark set by a Dolan-Mor\'e performance profile~\cite{dolan_benchmarking_2002}.

\begin{table}[t]
\centering
\caption{Average nonlinear iteration count for $\nu=10^{-3}$ and $\reg=10^{-5}$. Variant~Pic deteriorates toward smaller $\pn$, whereas Variants~exN and modN remain $h$-robust.}
\label{tab:newton-compact}
\setlength{\tabcolsep}{4pt}
\begin{tabular}{r|rrr|rrr|rrr|rrr|rrr}
\toprule
& \multicolumn{3}{c|}{$\pn=1.50$}
& \multicolumn{3}{c|}{$\pn=1.33$}
& \multicolumn{3}{c|}{$\pn=1.25$}
& \multicolumn{3}{c|}{$\pn=1.20$}
& \multicolumn{3}{c}{$\pn=1.16$} \\[3pt]
\#cells
& \mc{c}{Pic} & \mc{c}{exN} & \mc{c|}{modN}
& \mc{c}{Pic} & \mc{c}{exN} & \mc{c|}{modN}
& \mc{c}{Pic} & \mc{c}{exN} & \mc{c|}{modN}
& \mc{c}{Pic} & \mc{c}{exN} & \mc{c|}{modN}
& \mc{c}{Pic} & \mc{c}{exN} & \mc{c}{modN} \\
\midrule
    16 & 4.00 &  8.12 & 8.00 &   7.38 &  8.75 & 7.12 &   7.62 & 10.88 & 7.75 &   7.00 & -- & 7.25 &   -- & -- & 7.38 \\
    64 & 4.44 &  8.81 & 7.62 &   7.94 & 11.12 & 7.81 &   7.75 & 12.88 & 7.15 &   7.75 & -- & 7.69 &   -- & -- & 7.69 \\
   256 & 3.03 &  8.06 & 5.78 &   6.38 & 14.69 & 6.19 &   6.38 & 22.56 & 5.62 &   6.31 & -- & 6.28 &   -- & -- & 6.28 \\
  1024 & 3.02 & 10.48 & 5.53 &   5.81 & 27.17 & 5.56 &   5.67 & 19.11 & 5.16 &   5.69 & -- & 5.69 &   -- & -- & 5.66 \\
  4096 & 3.12 & 13.25 & 5.26 &   5.39 & 21.84 & 5.19 &   5.16 & 15.38 & 5.06 &   5.10 & -- & 5.10 &   -- & -- & 5.11 \\
 16384 & 3.03 & 18.50 & 4.79 &   4.93 & 13.82 & 4.80 &   5.14 & 13.73 & 4.52 &   4.92 & -- & 4.92 &   -- & -- & 4.77 \\
 65536 & 3.05 & 24.57 & 4.58 &   4.86 & 11.20 & 4.80 &   5.09 & 12.77 & 4.79 &   5.27 & -- & 4.77 &   -- & -- & 4.96 \\
262144 & 4.02 & 28.48 & 4.16 &   5.55 & 10.05 & 5.00 &   5.61 & 11.94 & 5.07 &   5.53 & -- & 5.25 &   -- & -- & 7.21 \\
\bottomrule
\end{tabular}
\end{table}

Here $n_{\mathrm{NL}}(n)$ denotes the number of nonlinear iterations on $\Sn$~\eqref{eq:In-Sn}, and $n_{\mathrm{L}}(n,m)$ the corresponding
FGMRES iteration count in nonlinear iteration $m$.
As aggregate cost measure we use the work
\begin{equation}\label{eq:work}
  W \coloneq \Bigl(\sum_{n}\sum_{m=1}^{n_{\mathrm{NL}}(n)} n_{\mathrm{L}}(n,m)\Bigr)\,N_{\mathrm{dof}},
\end{equation}
that is, the total number of preconditioned Krylov iterations multiplied by $N_{\mathrm{dof}}$, the number of space-time degrees of freedom per time step. The arithmetic mean of the
nonlinear iteration count over all time steps is denoted by $\overline{n}_{NL}$.

To expose the nonlinear solver behavior directly, \Cref{tab:newton-compact}
reports the mean nonlinear iteration count $\overline{n}_{\mathrm{NL}}$ over all
time steps for the representative regime $\nu=10^{-3}$ and $\reg=10^{-5}$,
resolved by the number of spatial cells, the power-law exponent $\pn$, and the
nonlinear solver. The table shows a clear separation between the methods. Picard
remains usable for moderate shear-thinning, but its performance deteriorates as
$\pn$ decreases and it fails for $\pn=1.16$. Exact Newton deteriorates more
severely: its iteration counts increase strongly under refinement already for
$\pn=1.33$ and $\pn=1.25$, and it breaks down for $\pn\le 1.20$. By contrast,
the modified Newton solver remains stable and essentially $h$-robust throughout,
typically requiring about five to eight nonlinear iterations. Its advantage is
most pronounced in the strongly shear-thinning regime, where it is the only
variant that solves all cases in the table.

To compare the tangent variants on the full benchmark set, we use a Dolan-Mor\'e
performance profile with the work~\eqref{eq:work} as cost.
Each benchmark instance $i\in\mathcal P$ corresponds to fixed model parameters and a fixed space-time resolution, and each curve
corresponds to one solver variant $s\in\{\text{Pic},\,\text{exN},\,\text{modN}\}$. For every instance we
normalize the work by the best variant on that instance and define the
performance profile $\pi_s(\tau)$ of variant $s$
\begin{equation}\label{eq:perf-profile}
  r_{i,s}\coloneq \frac{W_{i,s}}{\min_{s'\in\mathcal S}
    W_{i,s'}}\in[1,\infty)\,,\quad \pi_s(\tau)\coloneq \frac{1}{|\mathcal P|}\#\{i\in\mathcal P:\ r_{i,s}\le \tau\},\quad \tau\ge 1\,.
\end{equation}
Thus $\pi_s(\tau)$ is the fraction of benchmark instances for which solver $s$ is within a factor
$\tau$ of the smallest observed work~\eqref{eq:work}. In particular, $\pi_s(1)$ is the fraction of instances on which $s$ attains the smallest work; larger $\pi_s(\tau)$ indicates better overall efficiency across the benchmark set. \Cref{fig:dm-work} reports
$\pi_s(\tau)$ for all variants.
\begin{figure}
\centering
\includegraphics[height=4cm]{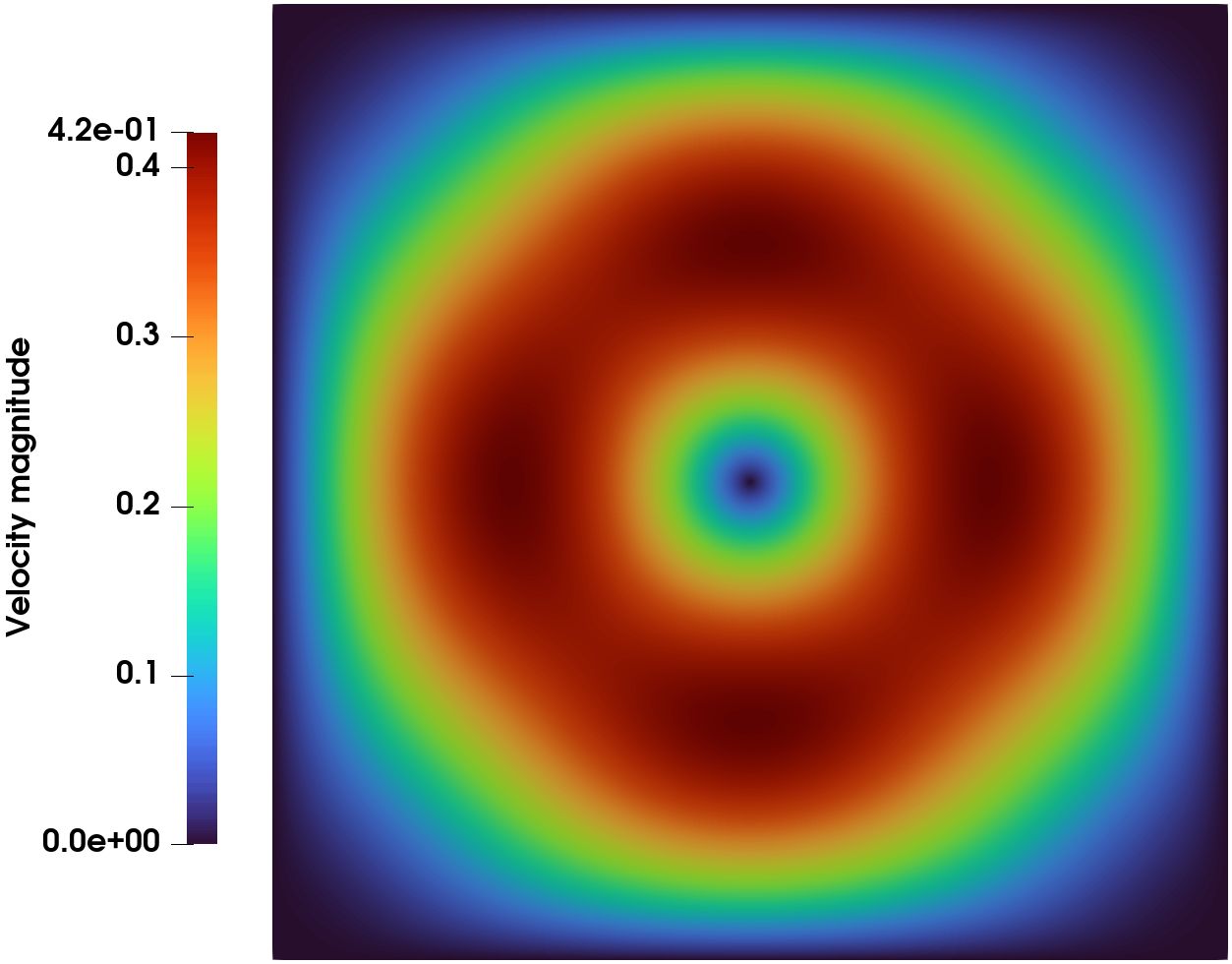}\hfill
\includegraphics[height=4cm]{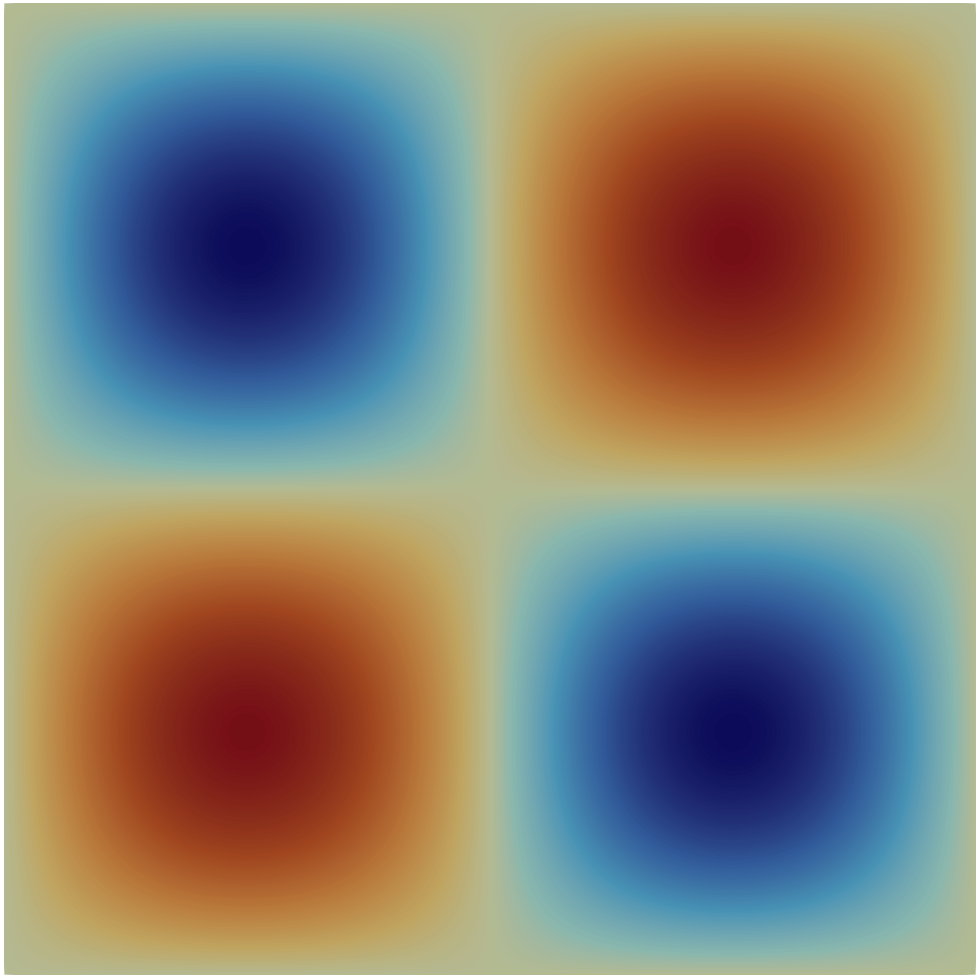}\hfill
\includegraphics[height=4cm]{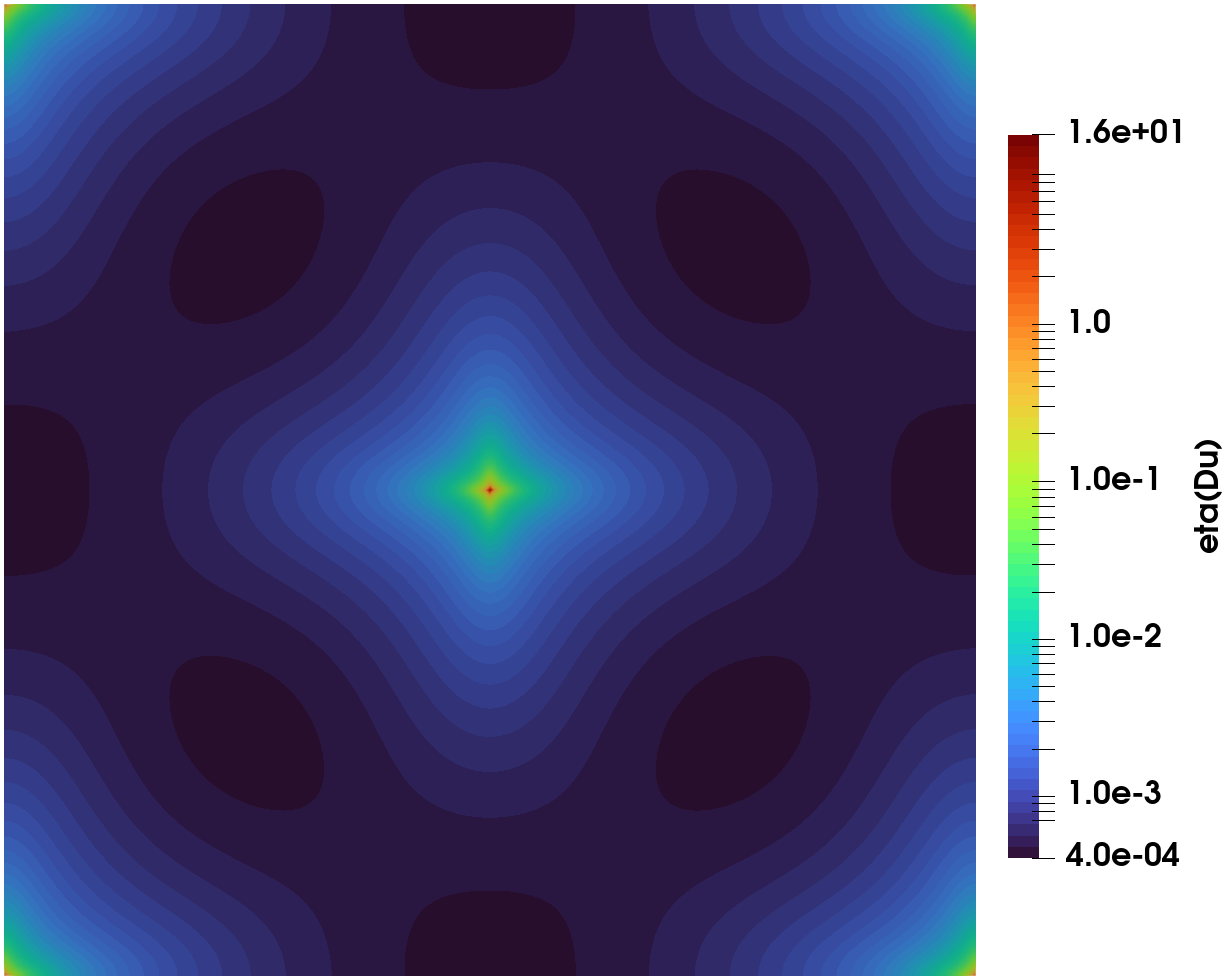}
\caption{\label{fig:snapshots}Plots of the calculated velocity, pressure and
  viscosity for the convergence test with $(\pn,\reg)=(1.16,10^{-5})$ and
  $\nu=10^{-3}$ on a mesh with
  \num[scientific-notation=false,round-precision=0]{262144} cells and 1024 time
  steps at the final time.}
\end{figure}
\begin{figure}
  \includegraphics{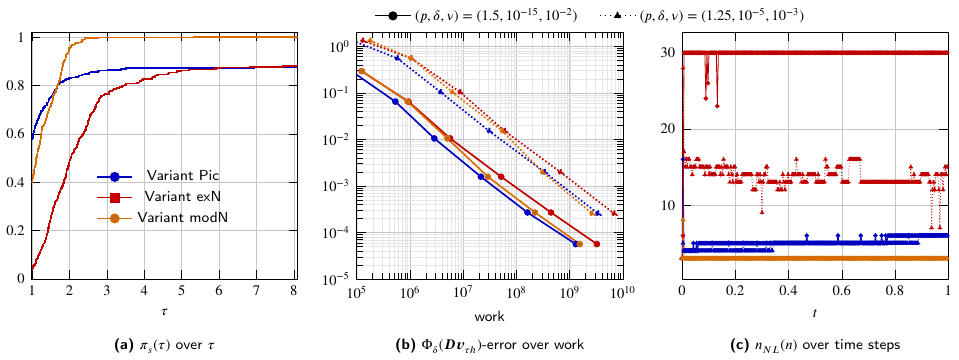}

\caption{\label{fig:dm-work}Performance summary. (a) Dolan-Mor\'e profile $\pi_s(\tau)$ over the full grid
  (work factor $\tau$ w.r.t.\ the per-instance minimum). (b) $\Phi_\reg(\DD\uu_{\tau h})$-error versus work,
  $\|\Phi_\reg(\DD\uu)-\Phi_\reg(\DD\uu_{\tau h})\|_{L^2((0,T);\,L^2(\Omega))}$, on representative tuples.
  (c) nonlinear iteration histories $n_{NL}(n)$ for the same tuples.}
\end{figure}
\begin{figure}[htbp]
  \centering
  \includegraphics{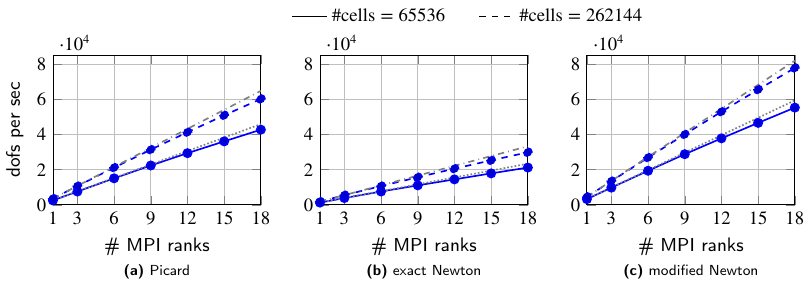}

\caption{\label{fig:strong-scaling-dofs}
Strong scaling results for the monolithic solver in degrees of freedom per
second for the Picard, exact Newton, and modified Newton linearizations on two
successive refinement levels.}
\end{figure}
\Cref{fig:dm-work} summarizes efficiency, robustness, and nonlinear iteration
behavior of the nonlinear solvers. In panel (a), the Dolan-Mor\'e profile of a
nonlinear solver reports the fraction of problems solved within a factor $\tau$
of the best performing solver. The modified Newton method is robust in almost
all settings. The only outliers are the least regularized, most strongly
shear-thinning cases \((\pn,\reg)=(1.16,10^{-20})\), where none of the variants
succeeds. Picard and exact Newton fail in substantially more cases and solve
fewer than \(90\%\) of the instances. For coarse meshes and moderate
\((\pn,\reg)\), Picard remains competitive and can even be the fastest option.
However, this advantage is limited: whenever modified Newton is not the fastest
variant, its work remains within a factor of two of the best observed
performance, while its reliability is substantially higher. Overall, modified
Newton provides the most favorable balance between reliability and efficiency
across the parameter regimes considered, in particular on fine meshes and for
\(\pn\downarrow 1\) and \(\delta\downarrow 0\). Panel~(b) shows the asymptotic
error-work slopes for the different solvers. The slopes are similar, so the
differences appear mainly as a horizontal shift: modified Newton and Picard
require less work than exact Newton to reach the same error. On coarse meshes,
Picard can be slightly more efficient, but this advantage is lost under
refinement. The iteration histories in panel~(c) show the same ordering. Exact
Newton exhibits pronounced spikes and persistently high iteration counts,
whereas the modified Newton solvers remain low and nearly uniform in time.
Picard lies between exact and modified Newton.

\Cref{fig:strong-scaling-dofs} shows near-ideal strong scaling for all three
linearizations on both refinement levels for $(\pn,\reg)=(1.25,10^{-5})$ and
$\nu=10^{-3}$. This is consistent with the fact that the dominant cost lies in
the local Vanka smoother, whose work is largely embarrassingly parallel. The
modified Newton variant attains the highest throughput, followed by Picard and
exact Newton. Picard outperforms exact Newton because the linear solver
converges faster, while the number of Picard iterations remains moderate,
leading to a lower overall computational cost. The finer level performs slightly
better in all three cases, indicating that the parallel overhead is effectively
amortized as the local problem size increases.

\subsection{Time-dependent flow around a cylinder: DFG benchmark}
\label{sec:dfg-bench}
\begin{figure}[!htb]
  \centering
  \includegraphics{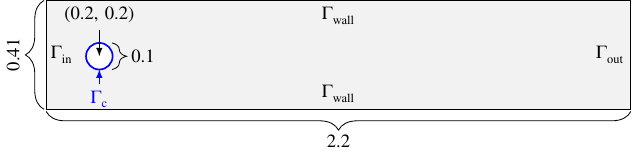}
\caption{Geometry of the test scenario with a parabolic inflow profile $\Gamma_{\textrm{in}}$,
  do-nothing boundary conditions at the outflow boundary $\Gamma_{\textrm{out}}$ and no-slip conditions on the obstacle and walls
  $\Gamma_{\textrm{wall}}$. The center of the obstacle is at $(0.2,\,0.2)$.}\label{fig:geom}
\end{figure}
\begin{figure}
  \centering
  \includegraphics[width=0.8\linewidth]{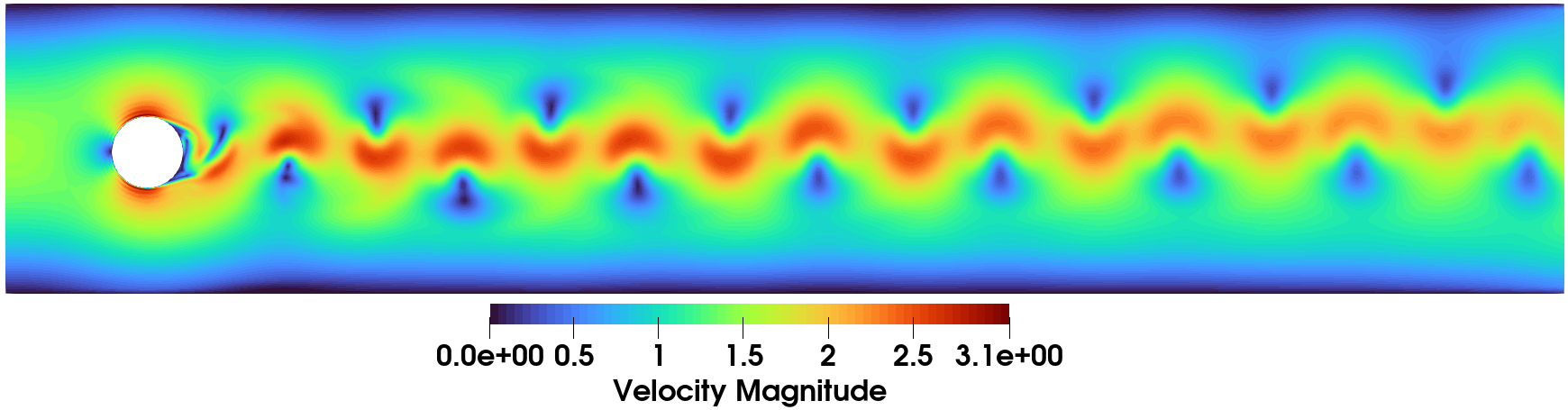}\hfill
  \includegraphics[width=0.8\linewidth]{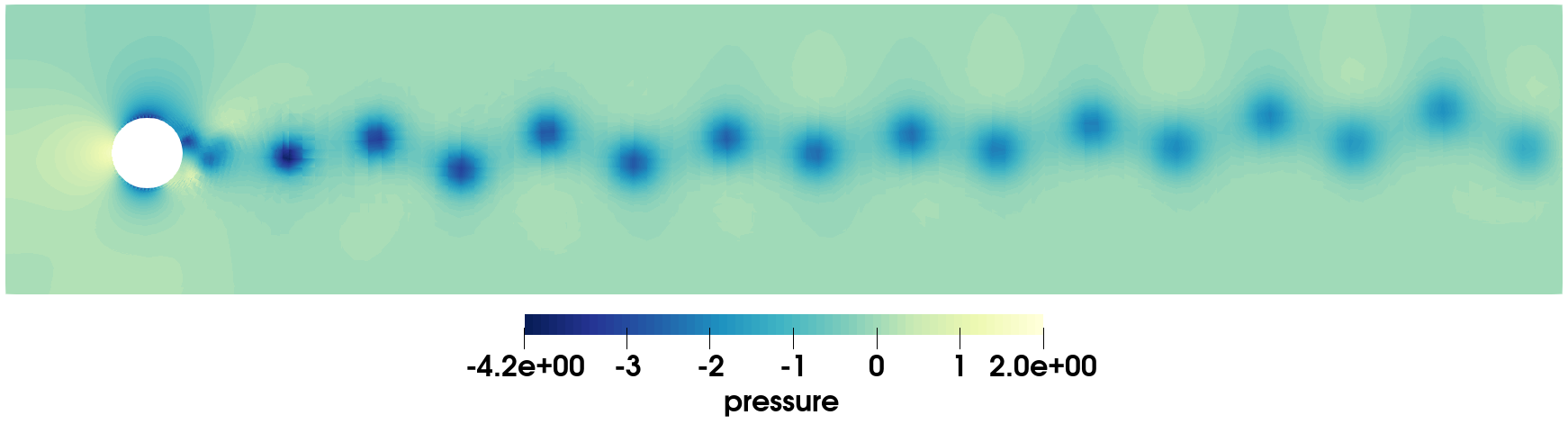}\hfill
  \includegraphics[width=0.8\linewidth]{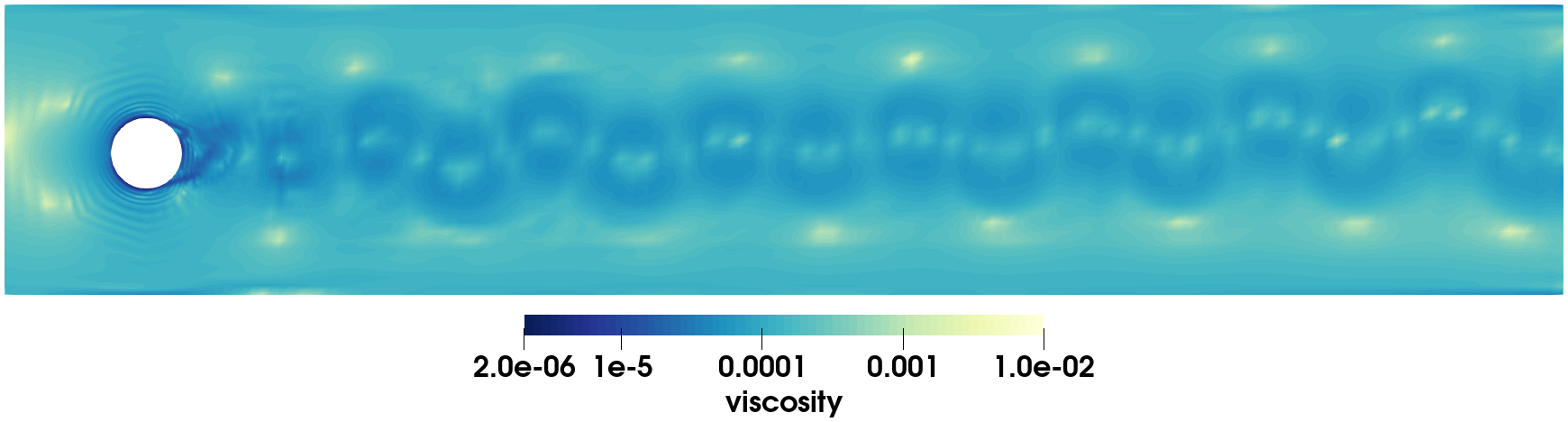}
  \caption{\label{fig:dfg-snapshots}Plots of the calculated velocity, pressure and
    viscosity for the DFG benchmark with $(\pn,\reg)=(1.25,10^{-10})$ and
    $\nu=10^{-3}$ on a mesh with
    \num[scientific-notation=false,round-precision=0]{4864} cells at the final
    time $T=8$.}
\end{figure}
\begin{figure}
  \centering
  \includegraphics{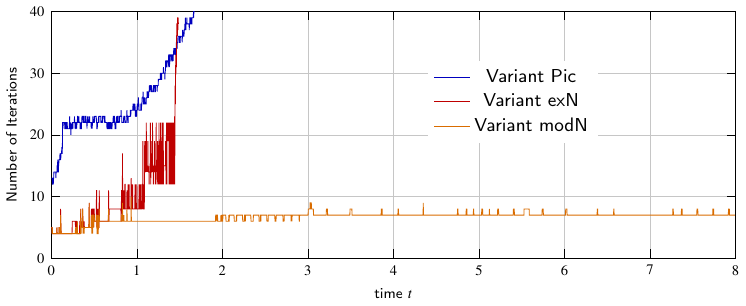}
  \caption{\label{fig:dfg-its}Plot of the nonlinear iteration histories $n_{NL}(n)$ for the Picard,
    solver, modified and exact Newton solver at spatial cells 4864 and 3392 timesteps. The exact Newton iteration stagnates, whereas the Picard iteration is too slow to reduce the residual sufficiently.}
\end{figure}
\begin{table}[htb]
  \centering
  \caption{The mean and maximum numbers of nonlinear iterations and FGMRES iterations per time step, reported over all time steps, for varying spatial and temporal resolutions (numbers of cells and degrees of freedom in space and time).}
  \label{tab:st-sizes}
  \sisetup{scientific-notation=true, round-precision=2}
  \begin{tabular}{
  S[scientific-notation=false,round-precision=0,table-format=5]
  S[scientific-notation=false,round-precision=0,table-format=5]
  S[scientific-notation=false,round-precision=0,table-format=6]
  S[scientific-notation=false,round-precision=0,table-format=5]
  S[scientific-notation=false,round-precision=2]
  S[scientific-notation=false,round-precision=2]
  S[scientific-notation=false,round-precision=0]
  S[scientific-notation=false,round-precision=0]
  }
    \toprule
     \multicolumn{2}{c}{\# elements}
     & \multicolumn{2}{c}{\# dofs}
     & \multicolumn{4}{c}{\# iterations}
     \\
    \mc{c}{space}
    & \mc{c}{time}
    & \mc{c}{$\uu$}
    & \mc{c}{$\pi$}
    & \mc{c}{$\overline{n}_{NL}$}
    & \mc{c}{$\overline{n}_{L}$}
    & \mc{c}{$\max {n}_{NL}$}
    & \mc{c}{$\max {n}_{L}$}\\
    \midrule
    304   & 848  &2624   &912   & 7.98 & 7.63 & 9 & 22\\
    1216  & 1696 &10112  &3648  & 7.49 & 7.41 & 9 & 28\\
    4864  & 3392 &39680  &14592 & 6.51 & 22.74 & 9 & 39\\
    19456 & 6784 &157184 &58368 & 6.02 & 23.62 & 9 & 47\\
    \bottomrule
  \end{tabular}
\end{table}
\begin{figure}
\begin{centering}
  \includegraphics{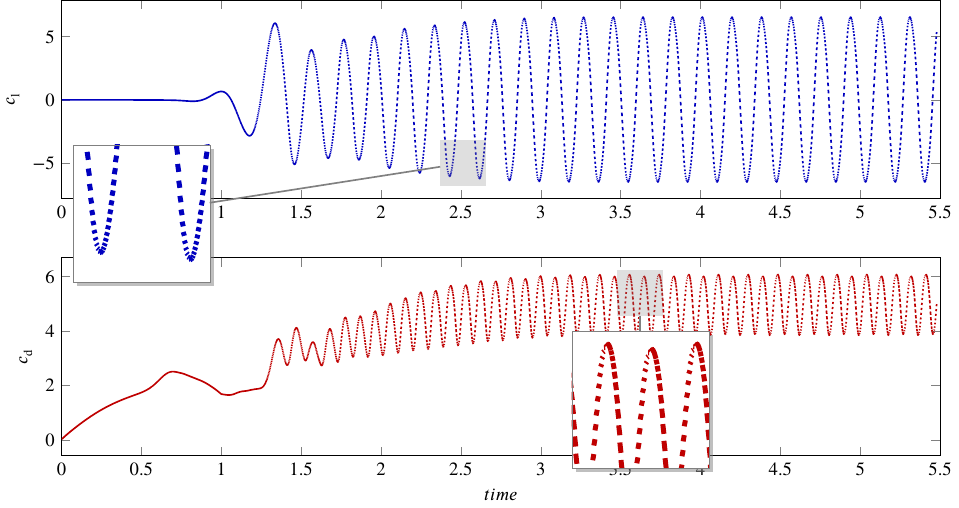}
\end{centering}
\caption{\label{fig:dl}Drag and lift functionals~\eqref{eq:drag-lift} plotted over time. We
  plot the functionals with the actual discontinuities between the time steps.}
\end{figure}

We consider a variation of the well-established ``laminar flow around a
cylinder'' benchmark of Sch\"afer and Turek~\cite{SchaeferTurek1996}, in
particular the unsteady 2D-2 testcase. This time-dependent example probes the time-step-wise
behavior of the fully implicit space-time solver, rather than only its asymptotic refinement behavior.
At the left boundary $\Gamma_{\mathrm{in}}\coloneq
0\times [0,\,H]$ we impose the Dirichlet inflow profile $\uu=\uu^D$ with
\begin{equation}
\uu^D(y,t) = \begin{pmatrix}
\bar{v}\dfrac{6y(H-y)}{H^2}\omega(t)\\[0.3em]
0
\end{pmatrix},
\quad \omega(t) = \begin{cases} \frac{1}{2}- \frac{1}{2}\cos( \pi t), & t\le 1, \\ 1, & t>1,
\end{cases}
\end{equation}
where $H=0.41$ is the channel height, $\bar v$ is the mean inflow velocity, and
$\omega(t)$ regularizes the startup phase of the flow. On the wall boundary
$\Gamma_{\mathrm{wall}}$ and on the obstacle, no-slip boundary conditions
$\uu=0$ are prescribed. On the outflow boundary $\Gamma_{\mathrm{out}}$ a
do-nothing outflow condition~\cite{HeywoodRannacherTurek1992} is used.
We evaluate the drag $c_{\mathrm d}$ and lift $c_{\mathrm l}$ acting on the obstacle through the functionals
\begin{equation}\label{eq:drag-lift}
  J_{i}(\uu,\pi)
  \coloneq \int_{\Gamma_{\mathrm c}}
  \bigl(\Sstress(\DD\uu)\nn-\pi\nn\bigr)\cdot\bm e_i \, \drv s,\; i\in\{1,\,2\},\qquad
  c_{\mathrm{d}}=\frac{2}{\bar v^{2} L}J_1,\;c_{\mathrm{l}}=\frac{2}{\bar v^{2} L}J_2
\end{equation}
where $L=0.1$ is the characteristic length, $\Gamma_{\mathrm c}$ denotes the
obstacle boundary, $\nn$ its outward unit normal, and $\bm e_i$ the Cartesian
unit vectors in the $i$-th direction. We fix $(\pn,\reg)=(1.25,10^{-10})$,
$\nu=10^{-3}$ and $\nu_\infty=0$. In this regime, convection stabilization is
required because of strong boundary layers at the obstacle. We use CIP with
$\gCIP=1$. Dirichlet conditions are imposed by Nitsche with penalties
$\gOne=\gTwo=10^3$, which we found necessary for robustness under strongly
varying viscosity. We employ the exact and modified Newton and the Picard
iteration. In \Cref{fig:dfg-its}, the exact Newton iteration stagnates as soon
as the temporal dynamics develop over only a few time steps. The Picard
iteration shows the same qualitative behavior: although it still advances the
computation, it repeatedly reaches the prescribed iteration limit before
achieving sufficient residual reduction. We therefore restrict the following
discussion to the modified Newton method.

Figure~\ref{fig:dfg-snapshots} shows a representative final-time solution. The wake
is well developed, the pressure field resolves the alternating vortex pattern,
and the effective viscosity is strongly reduced near the obstacle and in the
wake. These plots indicate that the discretization captures the main
shear-thinning flow features for this parameter set. Table~\ref{tab:st-sizes} shows that the time-dependent nonlinear solve remains robust across the full simulation horizon. The mean nonlinear iteration counts stay moderate and decrease under refinement, while the maximal count remains \(9\) on all levels. Thus the outer nonlinear iteration is stable not only under mesh refinement but also over many successive time steps. The linear solver shows a different behavior: the mean FGMRES counts increase strongly from the coarse to the fine meshes, but then stabilize on the two finest levels, whereas the maximal counts continue to increase. This identifies the linear algebra, rather than the nonlinear iteration, as the dominant remaining bottleneck for large time-dependent runs. Overall, the results show that the fully implicit space-time formulation can be advanced reliably in this challenging regime, with stable nonlinear performance and controlled linear-solver growth. Figure~\ref{fig:dl} shows representative drag and lift histories for a fine space-time discretization. The discontinuities between time steps are visible, as expected for a DG-in-time solution representation, while the overall histories remain stable and physically plausible throughout the simulation.

\FloatBarrier
\section{Conclusions and outlook}
\label{sec:conclusions}
We have presented a scalable Newton-Krylov framework for fully implicit
tensor-product space-time finite element discretizations of incompressible
$(\pn,\reg)$-Navier-Stokes flow in the shear-thinning regime $1<\pn<2$. The
decisive algorithmic difficulty is the constitutive tangent: as $\pn\downarrow
1$ and $\reg\downarrow 0$, the exact tangent becomes increasingly
ill-conditioned, which impairs both Newton globalization and preconditioning.
Accordingly, the key design choice is its treatment in the nonlinear
linearization. In the parameter regimes considered here, exact Newton
stagnates and Picard converges too slowly, whereas the modified Newton method
is the only variant that remains reliable. It keeps the nonlinear residual
unchanged and replaces only the constitutive tangent in the Jacobian action by
a better-conditioned surrogate.

Numerically, we recover the expected convergence rates for a manufactured
solution and observe bounded nonlinear iteration counts under $h$-refinement for
the modified Newton solver. The method remains effective as $\pn$ decreases and
also in the limit $\nu_\infty=0$. For the time-dependent DFG benchmark in
a strongly shear-thinning regime, the modified Newton solver shows robust
time-step-wise nonlinear performance over the full time interval. The main
remaining limitation lies in the linear solve: the iteration counts are not yet
fully mesh-robust, although they remain controlled on the finest levels
considered.

Taken together, these results show that fully implicit monolithic space-time
solvers for strongly shear-thinning flow are viable, provided that the
constitutive tangent is treated with care. The present framework provides a
solid basis for further development toward more demanding regimes and
large-scale simulations. Natural next steps include adaptive
$(h,\tau)$ refinement with robust patch rebuild criteria, and large-scale
studies of smoothing effort, rebuild frequency, and time-to-solution in
convection-dominated regimes.

\paragraph*{Acknowledgements.}
This project has received funding from the European Regional Development Fund (grant FEM Power II, ZS2024/06/18815) under the European Union's Horizon Europe Research and Innovation Program, which we gratefully acknowledge. Carolin Mehlmann acknowledges funding by the Deutsche Forschungsgemeinschaft (DFG, German Research Foundation) (SPP 1158: project number 463061012).

\appendix
\section{Gauss-Radau quadrature and remainder estimates}\label{sec:gr-underint}
On each $\In$ we evaluate time integrals of the spatial terms by the right-sided $(k{+}1)$-point
Gauss-Radau rule $Q_n$. It preserves the endpoint $t_n$ and is exact on $\mathbb P_{2k}(\In)$
(cf.~\cite{DavisRabinowitz1984Methods}). This choice yields the
local time matrices with the tensor-product structure used by the matrix-free implementation and
the monolithic space-time multigrid preconditioner. The following bounds show that the induced
quadrature defect is order-preserving.

\begin{lemma}[Gauss-Radau remainder]\label{lem:gr-quadrature}
Let $Q_n$ be the $(k{+}1)$-point Gauss-Radau quadrature rule as above.
If $\phi\in W^{2k+1,\infty}(\In)$, then with $C_k>0$ depending on $k$,
\[
  \Bigl|\int_{\In}\phi(t)\Dt - Q_n(\phi)\Bigr|
  \le C_k\,\tau_n^{2k+2}\,\|\phi^{(2k+1)}\|_{L^\infty(\In)}\,.
\]
\end{lemma}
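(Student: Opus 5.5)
The plan is the standard Peano-kernel / Taylor-remainder argument, combined with an affine scaling to the reference interval. Let $E_n(\phi)\coloneq\int_{\In}\phi\Dt-Q_n(\phi)$. The rule is exact on $\mathbb P_{2k}(\In)$, so $E_n(P)=0$ for every polynomial $P$ of degree at most $2k$. We therefore take $P$ to be the Taylor polynomial of degree $2k$ of $\phi$ about $t_{n-1}$ (any point of $\overline{\In}$ works) and write $E_n(\phi)=E_n(\phi-P)$.

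The remainder $r\coloneq\phi-P$ is controlled in integral form:
\[
  r(t)=\frac{1}{(2k)!}\int_{t_{n-1}}^{t}(t-s)^{2k}\phi^{(2k+1)}(s)\drv s .
\]
This is valid for $\phi\in W^{2k+1,\infty}(\In)$, since $\phi^{(2k)}$ is then Lipschitz and hence absolutely continuous. It gives the pointwise bound
\[
  \abs{r(t)}\le \frac{\tau_n^{2k+1}}{(2k+1)!}\,\|\phi^{(2k+1)}\|_{L^\infty(\In)}
  \qquad\text{for all } t\in\overline{\In}.
\]
Pointwise evaluation at the quadrature nodes is legitimate because $\phi$ is continuous.

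Next we bound both parts of $E_n(r)$. The Gauss-Radau weights $w_\mu$ are positive and sum to $\tau_n$, since the rule integrates constants exactly; the right-sided Radau weights are known to be positive. Hence
\[
  \abs{E_n(r)}\le \int_{\In}\abs{r}\Dt+\sum_{\mu}w_\mu\abs{r(t_n^\mu)}\le 2\tau_n\sup_{\In}\abs{r}
  \le \frac{2}{(2k+1)!}\,\tau_n^{2k+2}\|\phi^{(2k+1)}\|_{L^\infty(\In)} .
\]
This yields the claim with $C_k=2/(2k+1)!$.

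The argument can equivalently be phrased via the pullback to $\hat I=(0,1)$. There the reference rule is fixed and depends only on $k$, and each derivative of $\hat\phi(\hat t)=\phi(t_{n-1}+\tau_n\hat t)$ contributes a factor $\tau_n$; together with the Jacobian $\tau_n$ this produces the power $\tau_n^{2k+2}$.

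The only point needing care is the positivity of the weights, or at least a $k$-dependent bound $\sum_\mu\abs{w_\mu}\le C\tau_n$, together with the exactness degree $2k$. Both are classical properties of Radau quadrature (cf.~\cite{DavisRabinowitz1984Methods}). Even without positivity, the reference weights are fixed numbers depending only on $k$, so the same estimate holds with a modified constant $C_k$. Note that the sharper classical remainder would involve $\phi^{(2k+1)}$ at an intermediate point with the same power $\tau_n^{2k+2}$, so the crude $L^\infty$ bound above loses nothing in the order.
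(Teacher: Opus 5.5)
Your argument is correct. It takes a more self-contained route than the paper, which gives no proof of this lemma and simply invokes the classical Radau remainder from \cite{DavisRabinowitz1984Methods}. There the error is expressed through the Peano kernel, or in mean-value form as a $k$-dependent multiple of $\tau_n^{2k+2}\phi^{(2k+1)}(\xi)$ with $\xi\in\overline{I_n}$.

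You instead use the elementary Bramble--Hilbert-type argument. Exactness on $\mathbb P_{2k}$ lets you subtract the Taylor polynomial, and a bound on $\sum_\mu |w_\mu|$ turns the pointwise Taylor remainder into the estimate. This works directly under the stated hypothesis $\phi\in W^{2k+1,\infty}(I_n)$ and yields the explicit constant $C_k=2/(2k+1)!$. The mean-value form, by contrast, presupposes $\phi\in C^{2k+1}$; otherwise one must go through the Peano kernel and control its sign or its $L^1$ norm. The price of your route is a non-sharp constant, which does not matter for the order statement used in Lemma~\ref{lem:gr-assembled}.

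You also need not quote positivity of the weights from the literature. Since $\ell_\mu^2\in\mathbb P_{2k}$, exactness gives $w_\mu=Q_n(\ell_\mu^2)=\int_{I_n}\ell_\mu^2\,\mathrm{d}t>0$. This is the same fact the paper uses when it identifies $\bm M_t$ with the diagonal matrix of Gauss--Radau weights.
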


\begin{lemma}[Smoothness and higher derivatives]
  \label{lem:stress-higher-derivatives}
  Assume $\reg>0$. Then $\Sstress\in C^\infty(\mathbb{R}^{d\times d}_{\mathrm{sym}};\mathbb{R}^{d\times d}_{\mathrm{sym}})$.
  Moreover, for every integer $m\ge 1$ there exists a constant $C_m>0$, depending only on $m$, $\pn$, and $d$, such that for all $\bm A\in\mathbb{R}^{d\times d}_{\mathrm{sym}}$,
  \[
    \norm{D^m\Sstress(\bm A)}
    \ \le\ C_m\Bigl(\nu_{\infty}\,\mathbf 1_{\{m=1\}}+\nu\,\reg^{\pn-1-m}\Bigr).
  \]
\end{lemma}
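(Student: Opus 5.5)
The statement to prove is Lemma~\ref{lem:stress-higher-derivatives}. The plan is to use a scaling argument that reduces everything to a single fixed smooth function. Write $\Sstress=\Sstress_\infty+\Sstress_\nu$ with $\Sstress_\infty(\bm A)=\nu_\infty\bm A$ and $\Sstress_\nu(\bm A)=\nu(\reg^2+\abs{\bm A}^2)^{\frac{\pn-2}{2}}\bm A$. The linear part $\Sstress_\infty$ is smooth. Its first derivative is $\nu_\infty\Id$, which has norm $\nu_\infty$, and all its higher derivatives vanish; this produces the term $\nu_\infty\mathbf 1_{\{m=1\}}$. For $\Sstress_\nu$, smoothness is immediate because $\reg^2+\abs{\bm A}^2\ge\reg^2>0$, $s\mapsto s^{(\pn-2)/2}$ is $C^\infty$ on $(0,\infty)$, and $\bm A\mapsto\abs{\bm A}^2$ is a polynomial. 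So only the quantitative bound on $D^m\Sstress_\nu$ needs work.

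\textbf{Step 1 (scaling).} Define $G(\bm X)\coloneq(1+\abs{\bm X}^2)^{\frac{\pn-2}{2}}\bm X$ on $\mathbb{R}^{d\times d}_{\mathrm{sym}}$. Then
\[
  \Sstress_\nu(\bm A)=\nu\,\reg^{\pn-1}\,G(\bm A/\reg).
\]
By the chain rule,
\[
  D^m\Sstress_\nu(\bm A)=\nu\,\reg^{\pn-1-m}\,(D^mG)(\bm A/\reg).
\]
The whole claim therefore reduces to showing $\sup_{\bm X}\norm{D^mG(\bm X)}\le C_m$ with $C_m=C_m(m,\pn,d)$.

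\textbf{Step 2 (uniform bound on $D^mG$).} This is the main obstacle, since the supremum runs over an unbounded set. Put $\langle\bm X\rangle\coloneq(1+\abs{\bm X}^2)^{1/2}$. I will prove by induction on $m$ a stronger statement: every entry of $D^mG(\bm X)$ is a finite sum of terms of the form
\[
  c\,\langle\bm X\rangle^{\pn-2-2j}\,P(\bm X),
\]
where $P$ is a monomial in the entries of $\bm X$ of degree $\le 2j+1-m$, and the number and size of the coefficients $c$ depend only on $m,\pn,d$. The base case $m=0$ is $G$ itself, with $j=0$ and $P=X_{ab}$ of degree $1$. For the inductive step, differentiate a single term:
\begin{itemize}
  \item Differentiating $\langle\bm X\rangle^{\pn-2-2j}$ gives $(\pn-2-2j)\langle\bm X\rangle^{\pn-2-2(j+1)}X_{ab}$. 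This raises $j$ by one and the monomial degree by one, so the degree bound $2(j+1)+1-(m+1)$ still holds.
  \item Differentiating $P$ lowers its degree by one and leaves $j$ unchanged, which again matches $2j+1-(m+1)$.
\end{itemize}
Using $\abs{P(\bm X)}\le\langle\bm X\rangle^{\deg P}$, each term is bounded by $\abs{c}\,\langle\bm X\rangle^{\pn-1-m}$. Since $\pn<2$ and $m\ge1$, the exponent $\pn-1-m$ is negative, so $\langle\bm X\rangle^{\pn-1-m}\le 1$. This gives the uniform bound $C_m$. (In fact the argument yields the sharper decay $\langle\bm X\rangle^{\pn-1-m}$, but only boundedness is needed here.)

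\textbf{Step 3 (conclusion).} Combine Steps 1 and 2 with the triangle inequality $\norm{D^m\Sstress}\le\norm{D^m\Sstress_\infty}+\norm{D^m\Sstress_\nu}$. Because all norms on the finite-dimensional spaces of multilinear maps are equivalent, the choice of operator norm only changes constants depending on $d$ and $m$. The result is the stated bound
\[
  \norm{D^m\Sstress(\bm A)}\le C_m\bigl(\nu_\infty\mathbf 1_{\{m=1\}}+\nu\,\reg^{\pn-1-m}\bigr).
\]
As a consistency check, for $m=1$ it agrees with \eqref{eq:pns-dstress}: there $\mu\le\nu\reg^{\pn-2}$, and the rank-one term is bounded by $(2-\pn)\mu$.
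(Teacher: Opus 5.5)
Your proof is correct and follows the paper's route. Like the paper, you split off $\nu_\infty\bm A$ and bound the derivatives of $\nu(\reg^2+\abs{\bm A}^2)^{\frac{\pn-2}{2}}\bm A$ by repeated chain and product rules. Your scaling identity $\Sstress_\nu(\bm A)=\nu\reg^{\pn-1}G(\bm A/\reg)$ and the inductive bookkeeping $c\,\langle\bm X\rangle^{\pn-2-2j}P(\bm X)$ with $\deg P\le 2j+1-m$ supply what the paper's one-line sketch leaves implicit: each differentiation costs one power of $(\reg^2+\abs{\bm A}^2)^{1/2}\ge\reg$, which makes the bound uniform in $\bm A$ and fixes the power of $\reg$ at exactly $\pn-1-m$.
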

\begin{proof}
  Write
  \[
    \Sstress(\bm A)
    = \nu_\infty \bm A + \nu\,\psi(\bm A)\,\bm A,
    \qquad
    \psi(\bm A)\coloneq (\reg^2+\abs{\bm A}^2)^{\frac{\pn-2}{2}}.
  \]
  Since $\reg>0$, the map $\psi$ is $C^\infty$ on $\mathbb{R}^{d\times d}_{\sym}$. Repeated application of the chain rule to $\psi(\bm A)\bm A$ shows that each $m$-th derivative is bounded by a constant multiple of
  $\nu\,\reg^{\pn-1-m}$, while the linear part contributes only for $m=1$.
\end{proof}

\begin{lemma}[Quadrature error for convection and regularized stress]\label{lem:gr-conv-visc}
Assume $\reg>0$ and let $\uu_{\tau h},\vv_{\tau h}\in \Pk(\In;\Vh)$. Define
\[
  \phi_{\mathrm{conv}}(t)\coloneq \bigl(\uu_{\tau h}(t)\otimes\uu_{\tau h}(t),\GG\vv_{\tau h}(t)\bigr)_{L^2(\Omega)},
  \quad
  \phi_{\mathrm{visc}}(t)\coloneq \bigl(\Sstress(\DD\uu_{\tau h}(t)),\DD\vv_{\tau h}(t)\bigr)_{L^2(\Omega)}.
\]
Then $\phi_{\mathrm{conv}},\phi_{\mathrm{visc}}\in W^{2k+1,\infty}(\In)$ and
\begin{equation}\label{eq:gr-conv-visc}
  \Bigl|\int_{\In}\phi_{\star}(t)\Dt - Q_n(\phi_{\star})\Bigr|
  \le C_k\,\tau_n^{2k+2}\,\|\phi_{\star}^{(2k+1)}\|_{L^\infty(\In)},
  \qquad \star\in\{\mathrm{conv},\mathrm{visc}\}.
\end{equation}
If $k=0$, then $\uu_{\tau h}$ and $\vv_{\tau h}$ are piecewise constant in
time on $\In$, hence $\phi_{\mathrm{conv}}$ and $\phi_{\mathrm{visc}}$ are
constant and $Q_n$ is exact. If $k\ge 1$, then there exists $C=C(k,d)$ such that
\begin{equation}\label{eq:gr-bound-conv}
  \begin{split}
    \|\phi_{\mathrm{conv}}^{(2k+1)}\|_{L^\infty(\In)}
    &\le C  \sum_{\substack{\alpha_1{+}\alpha_2{+}\alpha_3=2k{+}1\\0\le \alpha_i\le k}}\;
    \|\dt^{\alpha_3}\vv_{\tau h}\|_{L^\infty(\In;\boldsymbol H^1)}
    \prod_{i=1}^2\|\dt^{\alpha_i}\uu_{\tau h}\|_{L^\infty(\In;\boldsymbol H^1)},
  \end{split}
\end{equation}
and
\begin{equation}\label{eq:gr-bound-visc}
  \|\phi_{\mathrm{visc}}^{(2k+1)}\|_{L^\infty(\In)}
  \le C\sum_{j=0}^k
  \|\dt^{j}\DD\vv_{\tau h}\|_{L^\infty(\In;L^2)}
  \,
  \|\dt^{2k+1-j}\Sstress(\DD\uu_{\tau h})\|_{L^\infty(\In;L^2)}.
\end{equation}
The last factor in \eqref{eq:gr-bound-visc} is estimated by repeated chain rules: for fixed $\reg>0$,
\Cref{lem:stress-higher-derivatives} bounds $D^m\Sstress$, and hence \eqref{eq:gr-bound-visc} is finite.
\end{lemma}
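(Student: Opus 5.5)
The plan is to treat $\phi_{\mathrm{conv}}$ and $\phi_{\mathrm{visc}}$ as smooth functions of $t$ on $\In$, establish the regularity $W^{2k+1,\infty}(\In)$, and then invoke \Cref{lem:gr-quadrature} directly to get \eqref{eq:gr-conv-visc}. The derivative bounds \eqref{eq:gr-bound-conv}--\eqref{eq:gr-bound-visc} come from the Leibniz rule in time, using that $\uu_{\tau h},\vv_{\tau h}$ are polynomials of degree $k$ in $t$ with values in the finite-dimensional space $\Vh$.

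\textbf{Step 1: the case $k=0$.} Here $\uu_{\tau h}(t)$ and $\vv_{\tau h}(t)$ are constant on $\In$. Hence both integrands are constant, and any rule with weights summing to $\tau_n$ is exact; for $k=0$ the one-point right Radau rule has weight $\tau_n$.

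\textbf{Step 2: the convective integrand.} For $k\ge1$, $\phi_{\mathrm{conv}}$ is a trilinear expression in three $\Vh$-valued polynomials of degree $k$. It is therefore a real polynomial of degree at most $3k$ in $t$, in particular $C^\infty$. Differentiating $2k+1$ times by the generalized Leibniz rule gives a sum over multi-indices with $\alpha_1+\alpha_2+\alpha_3=2k+1$. Terms with some $\alpha_i>k$ vanish because $\dt^{\alpha}$ annihilates $\Pk$. Each surviving term is
\[
\bigl(\dt^{\alpha_1}\uu_{\tau h}\otimes\dt^{\alpha_2}\uu_{\tau h},\GG\dt^{\alpha_3}\vv_{\tau h}\bigr).
\]
I bound it by Hölder ($L^4\cdot L^4\cdot L^2$) and the Sobolev embedding $\boldsymbol H^1\hookrightarrow \boldsymbol L^4$ for $d\le3$, with multinomial coefficients absorbed into $C(k,d)$. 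This yields \eqref{eq:gr-bound-conv}.

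\textbf{Step 3: the viscous integrand.} By \Cref{lem:stress-higher-derivatives}, $\Sstress$ is $C^\infty$ with bounded derivatives of every order, since $\reg>0$. The map $t\mapsto \DD\uu_{\tau h}(t)$ is polynomial, so $t\mapsto\Sstress(\DD\uu_{\tau h}(t))$ is smooth pointwise in $x$. Its time derivatives are given by Faà di Bruno's formula as sums of products of $D^m\Sstress(\DD\uu_{\tau h})$ with products of $\dt^{\beta_i}\DD\uu_{\tau h}$, where $\sum\beta_i$ equals the order of differentiation.

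These derivatives are bounded uniformly in $(x,t)$ up to factors $|\dt^{\beta}\DD\uu_{\tau h}|$. Since $\Vh$ is finite-dimensional with piecewise polynomial gradients, these factors lie in $L^\infty(\Omega)$, so all $L^2$ norms are finite. Differentiation under the integral is justified by dominated convergence. This shows $\phi_{\mathrm{visc}}\in C^{\infty}(\overline{\In})\subset W^{2k+1,\infty}(\In)$.

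The Leibniz rule applied to $(\Sstress(\DD\uu_{\tau h}),\DD\vv_{\tau h})$ gives $\sum_{j}\binom{2k+1}{j}(\dt^{2k+1-j}\Sstress,\dt^j\DD\vv_{\tau h})$, and only $j\le k$ survive. Cauchy--Schwarz in $L^2(\Omega)$ then yields \eqref{eq:gr-bound-visc}. Finiteness of the right-hand side follows from the Faà di Bruno expansion together with \Cref{lem:stress-higher-derivatives}.

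\textbf{Main obstacle.} The delicate point is Step 3: controlling $\|\dt^{r}\Sstress(\DD\uu_{\tau h})\|_{L^\infty(L^2)}$ needs products of several time derivatives of $\DD\uu_{\tau h}$ in $L^2$. This calls for $L^\infty$ control of all but one factor, which I would take from finite dimensionality (inverse inequalities on $\Vh$). The resulting constants may depend on $h$ and on $\reg^{\pn-1-m}$. Since the statement only claims finiteness of \eqref{eq:gr-bound-visc}, that dependence is acceptable, and I would not attempt an $h$- or $\reg$-uniform bound.
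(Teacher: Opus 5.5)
Your proposal is correct and follows the paper's proof. The estimate \eqref{eq:gr-conv-visc} comes directly from \Cref{lem:gr-quadrature}, the case $k=0$ is trivial exactness, and for $k\ge1$ both bounds come from the Leibniz rule in time (all derivatives of order $>k$ vanish) together with the chain rule and \Cref{lem:stress-higher-derivatives} for the stress. Beyond that, you fill in details the paper delegates or glosses over: H\"older with $\boldsymbol H^1\hookrightarrow\boldsymbol L^4$ for $\phi_{\mathrm{conv}}$, where the paper cites an external lemma, and finite dimensionality of $\Vh$ to make the Fa\`a di Bruno products finite in $L^2$, which correctly yields only an $h$-dependent finiteness constant.
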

\begin{proof}
Estimate~\eqref{eq:gr-conv-visc} is \Cref{lem:gr-quadrature}. For $k=0$ the
integrands are constant on $\In$, so $Q_n$ is exact. Let therefore $k\ge 1$.
Then the bounds \eqref{eq:gr-bound-conv}-\eqref{eq:gr-bound-visc} follow by
differentiating $\phi_{\mathrm{conv}}$, $\phi_{\mathrm{visc}}$ $2k{+}1$ times
and applying the product rule. For $\phi_{\mathrm{conv}}$ this is
\cite[Lemma~3.2]{MargenbergBause2026MonolithicSTMG}. All time derivatives
of $\uu_{\tau h}$ and $\vv_{\tau h}$ above order $k$ vanish. For
$\phi_{\mathrm{visc}}$ one applies the chain rule to
$\Sstress(\DD\uu_{\tau h})$, using boundedness of $D^m\Sstress$ for fixed
$\reg>0$ from \Cref{lem:stress-higher-derivatives}.
\end{proof}

\begin{lemma}[Assembled Gauss-Radau quadrature error on $\It$]\label{lem:gr-assembled}
Let $k\in\mathbb N_0$ and let $\uu_{\tau h}\in Y_\tau^k(\It)\otimes\Vh$ satisfy
\begin{equation}\label{eq:assump-time-stab}
\max_{0\le \ell\le k}\max_{n=1,\dots,N}
\|\dt^\ell\uu_{\tau h}\|_{L^\infty(\In;\boldsymbol W^{1,\infty}(\Omega))}
\le A_k.
\end{equation}
For $\vv_{\tau h}\in Y_\tau^k(\It)\otimes\Vh$ define on $\In$
\[
  \mathcal C_n(\uu_{\tau h})(\vv_{\tau h})\coloneq\int_{\In}\phi_{\mathrm{conv}}(t)\Dt,
  \qquad
  \mathcal V_n(\uu_{\tau h})(\vv_{\tau h})\coloneq\int_{\In}\phi_{\mathrm{visc}}(t)\Dt,
\]
and $\mathcal C_n^{\mathrm{GR}}\coloneq Q_n(\phi_{\mathrm{conv}})$,
$\mathcal V_n^{\mathrm{GR}}\coloneq Q_n(\phi_{\mathrm{visc}})$.
Then there exists $B_k=B_k(A_k,\pn,\nu_\infty,\reg)$ such that
\begin{equation}\label{eq:gr-assembled-bound}
\Bigl|\sum_{n=1}^N\bigl(\mathcal C_n-\mathcal C_n^{\mathrm{GR}}\bigr)\Bigr|
+\Bigl|\sum_{n=1}^N\bigl(\mathcal V_n-\mathcal V_n^{\mathrm{GR}}\bigr)\Bigr|
\le B_k\,\tau^{k+1}\,\|\vv_{\tau h}\|_{L^2(\It;\boldsymbol H^1(\Omega))}.
\end{equation}
\end{lemma}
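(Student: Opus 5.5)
We sum the local Gauss-Radau defects from \Cref{lem:gr-conv-visc} over the time steps and convert the local factor $\tau_n^{2k+2}$ into the global rate $\tau^{k+1}$. Two ingredients make this work: inverse inequalities in time for the discrete test function $\vv_{\tau h}$, and the uniform bound \eqref{eq:assump-time-stab} for $\uu_{\tau h}$. For $k=0$ both defects vanish by \Cref{lem:gr-conv-visc}, so we assume $k\ge1$.

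\emph{Step 1 (local bound).} Fix $n$ and apply \eqref{eq:gr-conv-visc}. In \eqref{eq:gr-bound-conv} and \eqref{eq:gr-bound-visc}, every derivative of $\uu_{\tau h}$ of order at most $k$ is controlled by $A_k$. For the viscous term we expand $\dt^{2k+1-j}\Sstress(\DD\uu_{\tau h})$ by the Fa\`a di Bruno formula. This gives a sum of terms $D^m\Sstress(\DD\uu_{\tau h})[\dt^{\beta_1}\DD\uu_{\tau h},\dots,\dt^{\beta_m}\DD\uu_{\tau h}]$ with $\sum\beta_i=2k+1-j$ and $\beta_i\le k$. By \Cref{lem:stress-higher-derivatives} the multilinear factor is bounded by a constant depending on $\nu_\infty,\nu,\reg,\pn$. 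The $\beta_i$-factors lie in $L^\infty$ in space, since $\boldsymbol W^{1,\infty}$ controls $\DD$, and are bounded by $A_k$. Since $|\Omega|$ is finite, we obtain $\|\dt^{2k+1-j}\Sstress(\DD\uu_{\tau h})\|_{L^\infty(\In;L^2)}\le C(A_k,\pn,\nu_\infty,\reg)$. For the convective term, the two $\uu$-factors are estimated in $L^\infty$ and the $\vv$-factor in $H^1$. Both cases therefore reduce to
\[
|\text{defect}_n|\le C\,\tau_n^{2k+2}\sum_{j=0}^{k}\|\dt^{j}\vv_{\tau h}\|_{L^\infty(\In;\boldsymbol H^1)}.
\]

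\emph{Step 2 (inverse estimates in time).} Since $\vv_{\tau h}|_{\In}\in\Pk(\In;\Vh)$, the standard polynomial inverse inequalities give
\[
\|\dt^j\vv_{\tau h}\|_{L^\infty(\In;\boldsymbol H^1)}\le C\tau_n^{-j-1/2}\|\vv_{\tau h}\|_{L^2(\In;\boldsymbol H^1)},
\]
obtained by scaling to $\hat I$ and using equivalence of norms on the finite-dimensional space $\Pk(\hat I)$ with values in $\boldsymbol H^1$. With $j\le k$, the local defect is bounded by $C\tau_n^{2k+2-k-1/2}\|\vv_{\tau h}\|_{L^2(\In;\boldsymbol H^1)}=C\tau_n^{k+3/2}\|\vv_{\tau h}\|_{L^2(\In;\boldsymbol H^1)}$.

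\emph{Step 3 (summation).} By the triangle inequality and the discrete Cauchy-Schwarz inequality,
\[
\sum_n\tau_n^{k+3/2}\|\vv_{\tau h}\|_{L^2(\In;\boldsymbol H^1)}\le \tau^{k+1}\Bigl(\sum_n\tau_n\Bigr)^{1/2}\|\vv_{\tau h}\|_{L^2(\It;\boldsymbol H^1)}=\tau^{k+1}T^{1/2}\|\vv_{\tau h}\|_{L^2(\It;\boldsymbol H^1)}.
\]
The factor $T^{1/2}$ is absorbed into $B_k$. This yields \eqref{eq:gr-assembled-bound}; the estimate is in fact half a power better than stated.

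\emph{Main obstacle.} The delicate step is Step 1 for the viscous term: we must verify that every term of the chain-rule expansion has all $\uu$-derivatives of order at most $k$. Orders above $k$ vanish identically, which is exactly why the assumed bounds for $\ell\le k$ suffice. The $\reg$-dependent constants from \Cref{lem:stress-higher-derivatives} then enter $B_k$.
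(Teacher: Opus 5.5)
Your proof is correct and follows essentially the same route as the paper: you bound the local Gauss--Radau defect by $\tau_n^{2k+2}\|\phi^{(2k+1)}\|_{L^\infty(\In)}$, control the $\uu_{\tau h}$-dependence through $A_k$ and the derivative bounds on $\Sstress$, use the temporal inverse inequality to reach $\tau_n^{k+3/2}\|\vv_{\tau h}\|_{L^2(\In;\boldsymbol H^1)}$, and finish with Cauchy--Schwarz over $n$. One correction to your closing remark: the estimate is not half a power better than stated, because Cauchy--Schwarz uses up the extra $\tau_n^{1/2}$ to produce $(\sum_n\tau_n)^{1/2}=T^{1/2}$, so you obtain exactly $\tau^{k+1}$, with $T^{1/2}$ absorbed into $B_k$.
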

\begin{proof}
By \Cref{lem:gr-quadrature,lem:gr-conv-visc},
\[
  |\mathcal C_n-\mathcal C_n^{\mathrm{GR}}|+|\mathcal V_n-\mathcal V_n^{\mathrm{GR}}|
  \lesssim \tau_n^{2k+2}\Bigl(\|\phi_{\mathrm{conv}}^{(2k+1)}\|_{L^\infty(\In)}+\|\phi_{\mathrm{visc}}^{(2k+1)}\|_{L^\infty(\In)}\Bigr).
\]
Using \eqref{eq:gr-bound-conv} and \eqref{eq:assump-time-stab} we obtain
$\|\phi_{\mathrm{conv}}^{(2k+1)}\|_{L^\infty(\In)}\lesssim A_k^2
\sum_{\ell=0}^k\|\dt^\ell\vv_{\tau h}\|_{L^\infty(\In;\boldsymbol H^1)}$.
For $\phi_{\mathrm{visc}}$, \eqref{eq:gr-bound-visc} and \Cref{lem:stress-higher-derivatives} yield the analogous bound
\[
\|\phi_{\mathrm{visc}}^{(2k+1)}\|_{L^\infty(\In)}
\lesssim \mathcal P_k(A_k)\sum_{\ell=0}^k\|\dt^\ell\vv_{\tau h}\|_{L^\infty(\In;\boldsymbol H^1)}.
\]
Using the polynomial inverse inequality in time~\cite[(3.28)]{KarakashianMakridakis1998}, we obtain
\[
\sum_{\ell=0}^k\|\dt^\ell\vv_{\tau h}\|_{L^\infty(\In;\boldsymbol H^1)}
\lesssim \tau_n^{-k-\frac12}\|\vv_{\tau h}\|_{L^2(\In;\boldsymbol H^1)}.
\]
Hence
\[
|\mathcal C_n-\mathcal C_n^{\mathrm{GR}}|+|\mathcal V_n-\mathcal V_n^{\mathrm{GR}}|
\lesssim B_k\,\tau_n^{k+\frac32}\|\vv_{\tau h}\|_{L^2(\In;\boldsymbol H^1)}.
\]
Summing over $n$ and using Cauchy-Schwarz yields~\eqref{eq:gr-assembled-bound}.
\end{proof}

\begin{remark}[Use in an error identity]\label{rem:gr-absorb}
Once $\vv_{\tau h}$ is chosen as an error quantity, the factor
$\|\vv_{\tau h}\|_{L^2(\It;\boldsymbol H^1)}$ in \eqref{eq:gr-assembled-bound}
can be absorbed by the viscous term after Cauchy-Young. Here we use
\Cref{lem:gr-assembled} only to justify that the quadrature defect is order-preserving in $\tau$
and does not alter the algebraic structure in \Cref{sec:algebraic}.
\end{remark}

\section{Spectrum of the exact \texorpdfstring{$(\pn,\reg)$}{(p,delta)} constitutive tangent}\label{app:tangent-spectrum}
Recall the $(\pn,\reg)$ stress law \eqref{eq:constitutive-law}. Its Fr\'echet derivative
(cf.\ \eqref{eq:pns-dstress}) reads, for $\bm A,\bm H\in\mathbb{R}^{d\times d}_{\sym}$,
\begin{equation}\label{eq:app_exact_tangent}
  \DStress{\bm A}{\bm H}
  =\etaeff{\bm A}\,\bm H
  +\beta(\bm A)\,(\bm A:\bm H)\,\bm A,
  \qquad
  \beta(\bm A)\coloneq \nu(\pn-2)\bigl(\reg^2+\abs{\bm A}^2\bigr)^{\frac{\pn-4}{2}}.
\end{equation}
Thus $\bm H\mapsto\DStress{\bm A}{\bm H}$ is a rank-one perturbation of
$\etaeff{\bm A}\Id$ on $\mathbb{R}^{d\times d}_{\sym}$, and the spectrum follows from
the next standard observation.

\begin{lemma}[Rank-one perturbations]\label{lem:rankone-tangent}
Let $\eta>0$, $\bm a\in\mathbb{R}^{d\times d}_{\sym}$, and $\kappa\in\mathbb{R}$, and define
$\mathcal T(\bm B)\coloneq \eta\,\bm B+\kappa(\bm a:\bm B)\,\bm a$ for $\bm B\in\mathbb{R}^{d\times d}_{\sym}$.
Then
\[
  (\mathcal T(\bm B):\bm B)=\eta\,\abs{\bm B}^2+\kappa\,(\bm a:\bm B)^2,
  \qquad
  \operatorname{spectrum}(\mathcal T)=\{\eta,\ \eta+\kappa\abs{\bm a}^2\},
\]
where $\eta$ acts on $\{\bm B:\bm a:\bm B=0\}$ and $\eta+\kappa\abs{\bm a}^2$ on $\mathrm{span}\{\bm a\}$.
In particular,
\[
  \min\{\eta,\eta+\kappa\abs{\bm a}^2\}\,\abs{\bm B}^2
  \le (\mathcal T(\bm B):\bm B)\le
  \max\{\eta,\eta+\kappa\abs{\bm a}^2\}\,\abs{\bm B}^2,
\]
and $\mathcal T\succ 0$ if and only if both eigenvalues are positive, i.e.\
$\eta>0$ and $\eta+\kappa\abs{\bm a}^2>0$.
\end{lemma}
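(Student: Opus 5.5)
The plan is to use the Frobenius inner product on $\mathbb{R}^{d\times d}_{\sym}$ and split this finite-dimensional Hilbert space orthogonally into $\mathrm{span}\{\bm a\}$ and its complement $\bm a^\perp\coloneq\{\bm B:\bm a:\bm B=0\}$. First, the quadratic form identity comes straight from the definition: $\mathcal T(\bm B):\bm B=\eta\,\bm B:\bm B+\kappa(\bm a:\bm B)(\bm a:\bm B)$, which equals $\eta\abs{\bm B}^2+\kappa(\bm a:\bm B)^2$. The same calculation shows $\mathcal T(\bm B):\bm C=\mathcal T(\bm C):\bm B$, so $\mathcal T$ is self-adjoint. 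It therefore has a real spectrum and an orthonormal eigenbasis.

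Next I will read off the eigenvectors. If $\bm B\in\bm a^\perp$, then $\mathcal T(\bm B)=\eta\bm B$. If $\bm B=\bm a$, then $\mathcal T(\bm a)=(\eta+\kappa\abs{\bm a}^2)\bm a$. These two subspaces are orthogonal and together span the whole space, so $\mathcal T$ is diagonal with respect to this decomposition and its spectrum is $\{\eta,\eta+\kappa\abs{\bm a}^2\}$. The degenerate case $\bm a=\bm 0$ needs a separate remark: then $\mathcal T=\eta\Id$ and both listed values coincide with $\eta$. Also, if $\dim\mathbb{R}^{d\times d}_{\sym}=1$, the perpendicular subspace is trivial, but this cannot happen for $d\ge2$.

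For the two-sided bound, I will write $\bm B=\alpha\bm a/\abs{\bm a}+\bm B_\perp$ with $\bm B_\perp\in\bm a^\perp$, assuming $\bm a\neq\bm 0$. Then
\[
\mathcal T(\bm B):\bm B=(\eta+\kappa\abs{\bm a}^2)\alpha^2+\eta\abs{\bm B_\perp}^2
\quad\text{and}\quad
\abs{\bm B}^2=\alpha^2+\abs{\bm B_\perp}^2 .
\]
The quadratic form is thus a nonnegative combination of $\alpha^2$ and $\abs{\bm B_\perp}^2$ with the two eigenvalues as coefficients, which gives the min/max bounds immediately. Equivalently, this is the Rayleigh quotient characterization for a self-adjoint operator.

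Finally, positive definiteness of $\mathcal T$ is equivalent to both eigenvalues being positive: sufficiency follows from the lower bound, and necessity from testing with eigenvectors. The only point requiring care is to use the paper's standing assumption $d\ge2$, so that $\bm a^\perp$ is nontrivial and $\eta$ really is an eigenvalue. The statement already assumes $\eta>0$. There is no genuine obstacle here; it is standard linear algebra.
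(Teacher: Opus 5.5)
Your proof is correct and follows the paper's argument: the orthogonal split $\bm B=\bm B_\perp+\alpha\bm a$, with $\mathcal T(\bm B_\perp)=\eta\bm B_\perp$ and $\mathcal T(\bm a)=(\eta+\kappa\abs{\bm a}^2)\bm a$. Your extra remarks fill in details the paper leaves implicit: self-adjointness, the case $\bm a=\bm 0$ (where the paper's $\alpha=(\bm a:\bm B)/\abs{\bm a}^2$ is undefined), and the need for $d\ge 2$ so that $\eta$ is genuinely an eigenvalue.
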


\begin{proof}
Decompose $\bm B=\bm B_\perp+\alpha\bm a$ with $\bm a:\bm B_\perp=0$ and
$\alpha=(\bm a:\bm B)/\abs{\bm a}^2$.
Then $\mathcal T(\bm B_\perp)=\eta \bm B_\perp$ and
$\mathcal T(\bm a)=(\eta+\kappa\abs{\bm a}^2)\bm a$, giving the spectrum and the bounds.
\end{proof}

Applying Lemma~\ref{lem:rankone-tangent} to \eqref{eq:app_exact_tangent} with
$\eta=\etaeff{\bm A}$, $\kappa=\beta(\bm A)$, and $\bm a=\bm A$, we obtain:
if $\bm A=\bm 0$, then $\DStress{\bm 0}{\bm H}=\etaeff{\bm 0}\bm H$; if $\bm A\neq \bm 0$,
the tangent has exactly two eigenvalues,
\begin{equation}\label{eq:app_eigs}
  \lambda_\perp(\bm A)=\etaeff{\bm A},
  \qquad
  \lambda_\parallel(\bm A)=\etaeff{\bm A}+\beta(\bm A)\abs{\bm A}^2,
\end{equation}
with $\lambda_\perp(\bm A)$ on $\{\bm H:\bm A:\bm H=0\}$ (multiplicity $\dim(\mathbb{R}^{d\times d}_{\sym})-1$)
and $\lambda_\parallel(\bm A)$ in the direction $\bm A$.
Moreover, since $\etaeff{\bm A}>0$ and $\reg^2+(\pn-1)\abs{\bm A}^2>0$ for $\pn>1$, we have
$\lambda_\parallel(\bm A)>0$, hence $\DStress{\bm A}{\cdot}$ is symmetric
positive definite for all $\bm A$.
Then, with $r\coloneq \abs{\bm A}$, \eqref{eq:app_eigs} yields
\[
  \lambda_\parallel(\bm A)
  =\nu_{\infty}+\nu(\reg^2+r^2)^{\frac{\pn-4}{2}}\bigl(\reg^2+(\pn-1)r^2\bigr).
\]
For $1<\pn < 2$ we have $\beta(\bm A)\le 0$, hence $\lambda_\parallel(\bm A)\le \lambda_\perp(\bm A)$ and
thus the smallest eigenvalue acts in the stress direction $\bm A$.
\begin{equation}\label{eq:app_ratio}
  \frac{\lambda_\perp(\bm A)}{\lambda_\parallel(\bm A)}
  =\frac{\nu_{\infty}+\nu(\reg^2+r^2)^{\frac{\pn-2}{2}}}
         {\nu_{\infty}+\nu(\reg^2+r^2)^{\frac{\pn-4}{2}}\bigl(\reg^2+(\pn-1)r^2\bigr)}.
\end{equation}
In particular, if $\nu_{\infty}=0$, then the exact constitutive tangent becomes increasingly anisotropic as $\pn\downarrow 1$ in the
high-shear regime $r\gg \reg$:
\begin{equation}\label{eq:app_ratio_nu0}
  \frac{\lambda_\perp(\bm A)}{\lambda_\parallel(\bm A)}
  =\frac{\reg^2+r^2}{\reg^2+(\pn-1)r^2}
  \le \frac{1}{\pn-1}\,.
\end{equation}

\section{Characterization of local Vanka patch surrogate}
\label{app:patch-surrogate}
To justify single-time-point coefficient surrogates in the local Vanka solves,
we compare the surrogate patch matrix with the exact patch matrix on each slab.
In this appendix we continue the finest-level notation from
\Cref{sec:mg-framework} on a fixed slab $\Sn$,
writing $\tilde{\bm A}_{L,K}^n$ and $\bm A_{L,K}^n$.

\begin{lemma}[Surrogate perturbation for finest-level Vanka patches]
\label{lem:patch-surrogate}
Fix a slab $\Sn$, a patch $K\in\mathcal M_L$, and set
$N_K\coloneq M_K^{\boldsymbol v}+M_K^p$. Let
\[
  \mathfrak A_{n,K}:
  \bigl(\mathbb R^{M_K^{\boldsymbol v}}\bigr)^{k_L+1}
  \to
  \mathbb R^{(k_L+1)N_K\times (k_L+1)N_K}
\]
denote the exact finest-level patch matrix as a function of the local velocity
coefficient vectors at the Gauss-Radau points
$t_n^1,\dots,t_n^{k_L+1}\subset I_n$. Define
\[
  \bm A_{L,K}^n
  \coloneq
  \mathfrak A_{n,K}(\bm U_{n,K}^1,\dots,\bm U_{n,K}^{k_L+1}),
  \qquad
  \tilde{\bm A}_{L,K}^n
  \coloneq
  \mathfrak A_{n,K}(\bm U_{n,K}^{\mathrm{rep}},\dots,\bm U_{n,K}^{\mathrm{rep}}),
\]
where $\bm U_{n,K}^\mu\in\mathbb R^{M_K^{\boldsymbol v}}$ is the local velocity
coefficient vector at $t_n^\mu$, and $\bm U_{n,K}^{\mathrm{rep}}$ corresponds
to a representative time $t_n^{\mathrm{rep}}\in I_n$. All vector norms are
Euclidean, all matrix norms are the induced operator \(2\)-norms, and
\[
  \mathcal W(\bm B)
  \coloneq
  \{\langle \bm x,\bm B\bm x\rangle:
  \bm x\in\mathbb C^{(k_L+1)N_K},\ \|\bm x\|=1\}.
\]

Assume that $\bm A_{L,K}^n$ is invertible with
$\|(\bm A_{L,K}^n)^{-1}\|\le M_{n,K}$, and that:

\begin{enumerate}[label=(\roman*),leftmargin=*,itemsep=2pt,topsep=2pt,parsep=0pt]
\item there exist a bounded set
$\mathcal U_{n,K}\subset\mathbb R^{M_K^{\boldsymbol v}}$ containing all
$\bm U_{n,K}^\mu$ and $\bm U_{n,K}^{\mathrm{rep}}$, and a constant $L_{n,K}>0$,
such that
\begin{equation}\label{eq:patch-lipschitz}
  \|\mathfrak A_{n,K}(\bm V^1,\dots,\bm V^{k_L+1})
   -\mathfrak A_{n,K}(\bm W^1,\dots,\bm W^{k_L+1})\|
  \le
  L_{n,K}\sum_{\mu=1}^{k_L+1}\|\bm V^\mu-\bm W^\mu\|
\end{equation}
for all $(\bm V^\bullet),(\bm W^\bullet)\in(\mathcal U_{n,K})^{k_L+1}$;

\item there exists \(C_K>0\), independent of \(\tau_n\), such that
\begin{equation}\label{eq:patch-approx}
  \|\bm U_{n,K}^\mu-\bm U_{n,K}^{\mathrm{rep}}\|
  \le C_K\tau_n,
  \qquad \mu=1,\dots,k_L+1.
\end{equation}
\end{enumerate}

With
\[
  \bm E_{n,K}\coloneq \tilde{\bm A}_{L,K}^n-\bm A_{L,K}^n,
  \qquad
  \varepsilon_{n,K}\coloneq
  \|(\bm A_{L,K}^n)^{-1}\bm E_{n,K}\|,
\]
the following hold:

\begin{enumerate}[label=(\alph*),leftmargin=*,itemsep=2pt,topsep=2pt,parsep=0pt]
\item \emph{Perturbation bound.}
\begin{equation}\label{eq:patch-perturbation-bound}
  \|\bm E_{n,K}\|
  \le (k_L+1)L_{n,K}C_K\tau_n,
  \qquad
  \varepsilon_{n,K}
  \le M_{n,K}(k_L+1)L_{n,K}C_K\tau_n.
\end{equation}
Hence \(\varepsilon_{n,K}<1\) for all sufficiently small \(\tau_n\).

\item \emph{Preconditioned operator.}
\begin{equation}\label{eq:patch-spectrum-fov}
  \sigma\!\bigl((\bm A_{L,K}^n)^{-1}\tilde{\bm A}_{L,K}^n\bigr),\;
  \mathcal W\!\bigl((\bm A_{L,K}^n)^{-1}\tilde{\bm A}_{L,K}^n\bigr)
  \subset
  \{z\in\mathbb C:\ |z-1|\le \varepsilon_{n,K}\}.
\end{equation}

\item \emph{Surrogate stability.}
If \(\varepsilon_{n,K}<1\), then \(\tilde{\bm A}_{L,K}^n\) is invertible,
\begin{equation}\label{eq:patch-inverse-bound}
  \|(\tilde{\bm A}_{L,K}^n)^{-1}\|
  \le \frac{M_{n,K}}{1-\varepsilon_{n,K}},
\end{equation}
and
\begin{equation}\label{eq:patch-sv-bounds}
  (1-\varepsilon_{n,K})\,s_{\min}(\bm A_{L,K}^n)
  \le s_{\min}(\tilde{\bm A}_{L,K}^n)
  \le s_{\max}(\tilde{\bm A}_{L,K}^n)
  \le (1+\varepsilon_{n,K})\,s_{\max}(\bm A_{L,K}^n).
\end{equation}
\end{enumerate}
\end{lemma}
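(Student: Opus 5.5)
The proof is a direct matrix perturbation argument, carried out in the order (a), (b), (c).

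\emph{Step (a).} By definition,
\[
\bm E_{n,K}=\mathfrak A_{n,K}(\bm U^{\mathrm{rep}}_{n,K},\dots,\bm U^{\mathrm{rep}}_{n,K})-\mathfrak A_{n,K}(\bm U^1_{n,K},\dots,\bm U^{k_L+1}_{n,K}).
\]
All arguments lie in $\mathcal U_{n,K}$ by assumption (i). The Lipschitz bound \eqref{eq:patch-lipschitz} therefore gives $\|\bm E_{n,K}\|\le L_{n,K}\sum_\mu\|\bm U^\mu_{n,K}-\bm U^{\mathrm{rep}}_{n,K}\|$. Inserting \eqref{eq:patch-approx} for each of the $k_L+1$ summands yields the first bound in \eqref{eq:patch-perturbation-bound}. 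Submultiplicativity gives $\varepsilon_{n,K}\le\|(\bm A^n_{L,K})^{-1}\|\,\|\bm E_{n,K}\|\le M_{n,K}\|\bm E_{n,K}\|$, which is the second bound. Since $M_{n,K},L_{n,K},C_K$ are fixed, $\varepsilon_{n,K}<1$ once $\tau_n<\bigl(M_{n,K}(k_L+1)L_{n,K}C_K\bigr)^{-1}$. A caveat should be recorded here: this smallness conclusion requires $M_{n,K}L_{n,K}$ to stay bounded as $\tau_n\to0$. The lemma states $C_K$ as $\tau_n$-independent, and I will read the other constants as fixed data for the given slab.

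\emph{Step (b).} Write $(\bm A^n_{L,K})^{-1}\tilde{\bm A}^n_{L,K}=\bm I+\bm G$ with $\bm G\coloneq(\bm A^n_{L,K})^{-1}\bm E_{n,K}$, so $\|\bm G\|=\varepsilon_{n,K}$. For the field of values, take a unit vector $\bm x$. Then $\langle\bm x,(\bm I+\bm G)\bm x\rangle-1=\langle\bm x,\bm G\bm x\rangle$, and Cauchy--Schwarz bounds its modulus by $\|\bm G\|$. The spectrum is contained in the field of values, since for an eigenpair $(\lambda,\bm x)$ with $\|\bm x\|=1$ we have $\lambda=\langle\bm x,(\bm I+\bm G)\bm x\rangle$. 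Alternatively, the spectral radius of $\bm G$ is at most $\|\bm G\|$. This proves \eqref{eq:patch-spectrum-fov}.

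\emph{Step (c).} If $\varepsilon_{n,K}<1$, the Neumann series shows that $\bm I+\bm G$ is invertible with $\|(\bm I+\bm G)^{-1}\|\le 1/(1-\varepsilon_{n,K})$. Hence $\tilde{\bm A}^n_{L,K}=\bm A^n_{L,K}(\bm I+\bm G)$ is invertible, and
\[
\|(\tilde{\bm A}^n_{L,K})^{-1}\|\le\|(\bm I+\bm G)^{-1}\|\,\|(\bm A^n_{L,K})^{-1}\|\le \frac{M_{n,K}}{1-\varepsilon_{n,K}},
\]
which is \eqref{eq:patch-inverse-bound}. For the singular values I will use this factorization together with the standard inequalities $s_{\min}(\bm X\bm Y)\ge s_{\min}(\bm X)s_{\min}(\bm Y)$ and $s_{\max}(\bm X\bm Y)\le s_{\max}(\bm X)s_{\max}(\bm Y)$. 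Since $\|\bm G\|=\varepsilon_{n,K}$, we have $s_{\min}(\bm I+\bm G)\ge1-\varepsilon_{n,K}$ and $s_{\max}(\bm I+\bm G)\le1+\varepsilon_{n,K}$. Both bounds in \eqref{eq:patch-sv-bounds} follow.

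The only genuinely delicate point is the relative (left-multiplied) form of the perturbation. Bounding $s_{\min}(\tilde{\bm A})$ additively via Weyl's inequality would give $s_{\min}(\bm A)-\|\bm E\|$ rather than the multiplicative factor. The factorization $\tilde{\bm A}=\bm A(\bm I+\bm G)$ is exactly what produces the multiplicative bounds with $\varepsilon_{n,K}$ as stated, so I would organize parts (b) and (c) around it.
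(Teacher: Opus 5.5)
Your proposal is correct and follows the paper's proof essentially step for step: the Lipschitz/approximation bound for $\bm E_{n,K}$, the factorization $\tilde{\bm A}^n_{L,K}=\bm A^n_{L,K}(\bm I+\bm G)$ with the Neumann-series inverse bound, and the singular-value product inequalities. The paper bounds the eigenvalues directly via $\bm F\bm x=(\lambda-1)\bm x$ rather than through spectrum $\subset$ field of values, which is an immaterial difference; your caveat that ``sufficiently small $\tau_n$'' tacitly needs $M_{n,K}L_{n,K}$ to stay bounded is a fair sharpening that the paper only acknowledges indirectly, in its remark calling the result step-local.
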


\begin{proof}
By definition and \eqref{eq:patch-lipschitz},
\[
  \|\bm E_{n,K}\|
  =
  \|\mathfrak A_{n,K}(\bm U_{n,K}^{\mathrm{rep}},\dots,\bm U_{n,K}^{\mathrm{rep}})
   -\mathfrak A_{n,K}(\bm U_{n,K}^{1},\dots,\bm U_{n,K}^{k_L+1})\|
  \le
  L_{n,K}\sum_{\mu=1}^{k_L+1}
  \|\bm U_{n,K}^\mu-\bm U_{n,K}^{\mathrm{rep}}\|.
\]
Using \eqref{eq:patch-approx} gives \eqref{eq:patch-perturbation-bound}, and
hence
\[
  \varepsilon_{n,K}
  \le \|(\bm A_{L,K}^n)^{-1}\|\,\|\bm E_{n,K}\|
  \le M_{n,K}(k_L+1)L_{n,K}C_K\tau_n.
\]

For \eqref{eq:patch-spectrum-fov}, write
\[
  (\bm A_{L,K}^n)^{-1}\tilde{\bm A}_{L,K}^n
  = \bm I+\bm F,
  \qquad
  \bm F\coloneq (\bm A_{L,K}^n)^{-1}\bm E_{n,K},
\]
so that \(\|\bm F\|=\varepsilon_{n,K}\). If
\(\lambda\in \sigma(\bm I+\bm F)\) and \(\|\bm x\|=1\) is a corresponding
eigenvector, then \(\bm F\bm x=(\lambda-1)\bm x\), hence
\(|\lambda-1|\le \|\bm F\|=\varepsilon_{n,K}\). Likewise, for
\(\|\bm x\|=1\),
\[
  |\langle \bm x,(\bm I+\bm F)\bm x\rangle-1|
  = |\langle \bm x,\bm F\bm x\rangle|
  \le \|\bm F\|
  = \varepsilon_{n,K},
\]
which proves \eqref{eq:patch-spectrum-fov}.

If \(\varepsilon_{n,K}<1\), then
\[
  \tilde{\bm A}_{L,K}^n
  = \bm A_{L,K}^n(\bm I+\bm F),
\]
and the Neumann series yields
\[
  \|(\bm I+\bm F)^{-1}\|\le (1-\varepsilon_{n,K})^{-1}.
\]
Therefore,
\[
  \|(\tilde{\bm A}_{L,K}^n)^{-1}\|
  \le
  \frac{\|(\bm A_{L,K}^n)^{-1}\|}{1-\varepsilon_{n,K}}
  \le
  \frac{M_{n,K}}{1-\varepsilon_{n,K}},
\]
which proves \eqref{eq:patch-inverse-bound}. Finally, the product inequalities
for singular values and
\[
  (1-\|\bm F\|)\|\bm x\|
  \le \|(\bm I+\bm F)\bm x\|
  \le (1+\|\bm F\|)\|\bm x\|
\]
imply
\[
  1-\varepsilon_{n,K}\le s_{\min}(\bm I+\bm F)
  \le s_{\max}(\bm I+\bm F)\le 1+\varepsilon_{n,K},
\]
and hence \eqref{eq:patch-sv-bounds}.
\end{proof}

\begin{remark}[Scope of the assumptions]
\label{rem:patch-surrogate}
The essential structural hypothesis is the invertibility of the realized exact
patch matrix \(\bm A_{L,K}^n\) with bound \(M_{n,K}\); the result is therefore
step-local rather than neighborhood-uniform. The localized Lipschitz condition
\eqref{eq:patch-lipschitz} is natural for the present
\((\pn,\reg)\)-Navier-Stokes setting, since \(\reg>0\) yields smooth dependence
of the coefficient-dependent patch contributions on bounded sets of local
velocity states. The approximation property \eqref{eq:patch-approx} expresses
\(O(\tau_n)\) variation of the local velocity coefficients across the slab and
is used here as an assumption. The inclusion \eqref{eq:patch-spectrum-fov} shows
that the preconditioned surrogate patch operator remains close to the identity
in spectrum and numerical range, which is the relevant local approximation
property for the surrogate Vanka solve.
\end{remark}

\bibliographystyle{elsarticle-num}
\bibliography{references_pstokes}

\end{document}